\documentclass{article}
\usepackage{amsmath,amssymb}  
\usepackage{bm}  
\usepackage{amsthm}
\usepackage{color}

\newtheorem{theorem}{Theorem}
\newtheorem{proposition}[theorem]{Proposition}
\newtheorem{lemma}[theorem]{Lemma}
\newtheorem{cor}[theorem]{Corollary}

\newtheorem{definition}[theorem]{Definition}

\newtheorem{remark}[theorem]{Remark}



\makeatletter
\def\operatorname#1{\mathop{\operator@font #1}\nolimits}%
\makeatother
\newcommand{\R}{\mathbb{R}}
\newcommand{\C}{\mathbb{C}}

\newcommand{\Z}{\mathbb{Z}}

\newcommand{\half}{{\tfrac{1}{2}}}

\newcommand{\Imc}{{\mathcal{I}}m\ }

\newcommand{\Id}{\operatorname{Id}}
\newcommand{\id}{\operatorname{Id}}

\newcommand{\Sp}{\operatorname{Sp}}
\newcommand{\SP}{\operatorname{SP}}

\newcommand{\card}{\operatorname{Card}}

\newcommand{\Gr}{\operatorname{Gr}}

\newcommand{\sign}{\operatorname{Sign}}
\newcommand{\tr}{^{\tau}\!}

\newcommand{\Ker}{\operatorname{Ker}}

\def\adots{\mathinner{\mkern2mu\raise 1pt\hbox{.}\mkern 3mu\raise
4pt\hbox{.}\mkern2mu\raise 8pt\hbox{{.}}}}

\title{Generalized Conley-Zehnder index}
\author{Jean Gutt \\
	\small\hbox{\parbox[t]{1.9in}{\begin{center}
	D{\'e}partement de Math{\'e}matique \\
	Universit{\'e} Libre de Bruxelles \\
	Campus Plaine, C. P. 218 \\
	Boulevard du Triomphe \\
	B-1050 Bruxelles \\
	Belgium\\
	jeangutt\char64ulb.ac.be\end{center}}}
	\hbox{\parbox[t]{.6in}{\begin{center}\rm and \end{center}}}
	\hbox{\parbox[t]{1.9in}{\begin{center}
	Universit\'e de Strasbourg\\
	IRMA\\
	7 rue Ren\'e Descartes\\
	67000 Strasbourg\\
	France\\
	{gutt\char64math.unistra.fr}\end{center}}}
	}
\date{} 
\begin{document} 

\maketitle

\begin{abstract}
The Conley-Zehnder index  associates an integer to any continuous path of symplectic matrices starting from the identity and ending at a matrix which does not admit $1$ as an eigenvalue. We  give new ways to compute this  index. Robbin and Salamon  define a  generalization of the Conley-Zehnder index for any continuous path of symplectic matrices; this generalization is half integer valued. It is based on a Maslov-type index that they define for a continuous path of Lagrangians in a symplectic vector space $(W,\overline{\Omega})$, having chosen a given reference Lagrangian $V$. Paths of symplectic endomorphisms of $(\R^{2n},\Omega_0)$ are viewed as paths of Lagrangians defined by their graphs in $(W=\R^{2n}\oplus \R^{2n},\overline{\Omega}=\Omega_0\oplus -\Omega_0)$ and the reference Lagrangian is the diagonal. Robbin and Salamon give properties of this generalized Conley-Zehnder index and an explicit formula when the path has only regular crossings.
We give here an axiomatic characterization of this generalized Conley-Zehnder index.
We also give an explicit way to compute it for any continuous path of symplectic matrices. 
\end{abstract}




\section{Introduction}
The Conley-Zehnder index  associates an integer to any continuous path $\psi$ defined on the  interval $[0,1]$ with values in the group $\Sp\Bigl(\R^{2n},\Omega_0=\left(\begin{smallmatrix} 0&\Id\\ -\Id&0\end{smallmatrix}\right)\Bigr)$ of $2n\times 2n$ symplectic matrices, starting from the identity and ending at a matrix   which does  not admit $1$ as an eigenvalue.  This index is used in the definition of the grading of Floer homology theories.
If the path $\psi$ were a loop with values in the unitary group, one could define an integer by looking at the degree of the loop in the circle defined by the (complex) determinant -or an integer power of it. The construction  \cite{SalamonZehnder, Salamon, Audin} of the Conley-Zehnder index is based on this idea. One uses
a continuous map $\rho$ from the sympletic group $\Sp(\R^{2n},\Omega_0)$ into $S^1$ and an ``admissible''
extension of  $\psi$ to a path $\widetilde{\psi} : [0,2] \rightarrow \Sp(\R^{2n},\Omega_0)$  in such a way that $\rho^2\circ \widetilde{\psi}:[0,2]\rightarrow S^1$ is a loop. 
The Conley-Zehnder index of $\psi$ is defined as the degree of this loop 
\begin{equation*}
   \mu_{\textrm{CZ}}(\psi) :=\deg (\rho^2\circ \widetilde{\psi}).
\end{equation*}
We recall  this construction in section \ref{CZindex} with the precise definition of the map $\rho$. The value of $\rho(A)$   involves the algebraic multiplicities of the real negative eigenvalues of $A$ and the signature of natural symmetric $2$-forms defined
on the generalised eigenspaces of $A$ for the non real eigenvalues lying on  $S^1$.
We give  alternative ways to compute this index :
\begin{theorem}\label{thm:CZ}
Let $\psi : [0,1] \rightarrow \Sp(\R^{2n},\Omega_0)$ be a  continuous path of matrices  linking the matrix $\Id$ to a matrix which does  not admit $1$ as an eigenvalue.
Let  $\widetilde{\psi} : \left[0,2\right] \rightarrow \Sp(\R^{2n},\Omega_0)$ be an extension such that $\widetilde{\psi}$ coincides with $\psi$ on the interval $\left[0,1\right],$ such that
$\widetilde{\psi}(s)$  does  not admit $1$ as an eigenvalue  for all $s\geqslant 1$ and  such that the path ends either at $\widetilde{\psi}(2) =W^+:= -\Id$ either at
$\widetilde{\psi}(2) = W^- := \operatorname{diag}(2,-1,\ldots,-1,\half,-1,\ldots,-1).$
The Conley-Zehnder index of $\psi$ is equal to the integer given by the degree of the map ${\tilde{\rho}}^2 \circ \tilde{\psi} : \left[0,2\right] \rightarrow \textrm{S}^1:$
	\begin{equation}
		\mu_{\textrm{CZ}}(\psi) = \operatorname{deg}({\tilde{\rho}}^2\circ \widetilde{\psi})
	\end{equation}
for ANY continuous map ${\tilde{\rho}}: \Sp(\R^{2n},\Omega_0)\rightarrow S^1$ with the following properties:
\begin{enumerate}
\item ${\tilde{\rho}}$ coincides with the (complex) determinant $\det_\C$ on $\textrm{U}\left(n\right) = \textrm{O}\left(\R^{2n}\right) \cap \Sp\left(\R^{2n},\Omega_0\right)$; 
\item  ${\tilde{\rho}}(W^-)\in \{\pm 1\}$;
\item	 $\operatorname{deg}\, ({\tilde{\rho}}^2\circ \psi_{2-})= n-1$ \\
for  $\psi_{2-} : t \in [0,1]\mapsto \exp t \pi J_0 \left(\begin{smallmatrix}
		0 & 0 & -\frac{\log 2}{\pi} & 0\\
		0 & \Id_{n-1} & 0 & 0\\
		-\frac{\log 2}{\pi} & 0 & 0 & 0\\
		0 & 0 & 0 & \Id_{n-1}
	\end{smallmatrix}\right)$.
\end{enumerate}
In particular, two alternative ways to compute the Conley-Zehnder index are :
\begin{itemize}
\item   Using the polar decomposition of a matrix,
 \begin{equation}
\mu_{\textrm{CZ}}(\psi) = \operatorname{deg}({{\det}_\C}^2\circ U\circ \widetilde{\psi})
\end{equation}
	where $U:  \Sp(\R^{2n},\Omega_0)\rightarrow \textrm{U}\left(n\right) : A \mapsto AP^{-1}$ with $P$ the unique symmetric positive definite matrix such that $P^2=A\tr A.$
\item Using the normalized determinant of the $\C$-linear part of a symplectic matrix,
 \begin{equation}	\mu_{\textrm{CZ}}(\psi) = \operatorname{deg}(\hat\rho^2\circ \widetilde{\psi})
 \end{equation}
	where 
	$\hat\rho:  \Sp(\R^{2n},\Omega_0)\rightarrow S^1 : A \mapsto 
\hat\rho(A)=\frac{{\det}_{\C} \left(\half(A-J_0AJ_0)\right)}{\left\vert{\det}_{\C} \left(\half(A-J_0AJ_0)\right)\right\vert}$\\
	with $J_0=\left(\begin{smallmatrix} 0&-\id\\\id&0\end{smallmatrix}\right)$ the standard complex structure on $\R^{2n}$.
\end{itemize}
\end{theorem}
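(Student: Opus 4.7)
Set $f := \tilde\rho^2/\rho^2 \colon \Sp(\R^{2n},\Omega_0) \to S^1$, where $\rho$ is the Salamon-Zehnder map recalled in Section~\ref{CZindex}. Both $\tilde\rho$ and $\rho$ restrict to $\det_\C$ on $U(n)$ by property~(1), so $f\equiv 1$ on $U(n)$; since $\Sp(\R^{2n},\Omega_0)$ deformation retracts onto $U(n)$, the map $f$ is null-homotopic and $f_\ast = 0$ on $\pi_1$. Property~(2), together with the standard equality $\rho(W^-) \in \{\pm 1\}$, gives $f(W^-) = 1$, and $f(W^+) = 1$ as $W^+ = -\Id \in U(n)$. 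Hence $f\circ\tilde\psi\colon[0,2]\to S^1$ is a loop at $1$, and
\begin{equation*}
\deg(\tilde\rho^2\circ\tilde\psi) - \mu_{\textrm{CZ}}(\psi) \;=\; \deg(f\circ\tilde\psi).
\end{equation*}

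Next, one shows $\deg(f\circ\tilde\psi)$ depends only on the endpoint $W^\pm$ of $\tilde\psi$: for two admissible extensions $\tilde\psi_1,\tilde\psi_2$ ending at the same $W^\pm$, their concatenation $\tilde\psi_1\ast\tilde\psi_2^{-1}$ is a loop in $\Sp(\R^{2n},\Omega_0)$ on which $f$ has degree zero by $f_\ast = 0$. It is thus enough to compute one convenient admissible path for each endpoint. For $W^+ = -\Id$, take $\tilde\psi_+(s) = \exp(\tfrac{s\pi}{2}J_0)$: this is admissible and lies in $U(n)$ where $f\equiv 1$, so $\deg(f\circ\tilde\psi_+) = 0$. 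For $W^-$, use the extension equal to $\psi_{2-}$ on $[0,1]$ and constantly $W^-$ on $[1,2]$: property~(3) gives $\deg(\tilde\rho^2\circ\tilde\psi) = n-1$, while decomposing $\psi_{2-}(t)$ into $\operatorname{diag}(2^t,2^{-t})$ on the first symplectic plane (contributing $0$ to $\deg(\rho^2\circ -)$) and $\exp(t\pi J_0)\in U(n-1)$ on $\R^{2(n-1)}$ (contributing $n-1$) gives $\mu_{\textrm{CZ}}(\psi_{2-}) = n-1$; hence $\deg(f\circ\tilde\psi) = 0$.

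For the two explicit formulas stated at the end of the theorem, verify that $\det_\C\circ U$ and $\hat\rho$ satisfy (1), (2), (3). Property~(1) is immediate since $A\in U(n)$ implies $U(A) = A$ and $\half(A - J_0 A J_0) = A$. For property~(2): $U(W^-) = \operatorname{diag}(1,-I_{n-1},1,-I_{n-1})$ has $\det_\C = (-1)^{n-1}$, and $\half(W^- - J_0 W^- J_0) = \operatorname{diag}(5/4,-I_{n-1},5/4,-I_{n-1})$ has $\hat\rho$-value $(-1)^{n-1}$ after normalising. Property~(3) follows from the same block decomposition of $\psi_{2-}$: on the first plane $\psi_{2-}(t)$ is symmetric positive, so $U$ and $\half(\cdot - J_0\cdot J_0)$ applied to it yield real positive diagonal matrices, contributing $1$ to the winding; on $\R^{2(n-1)}$, $\psi_{2-}(t) = \exp(t\pi J_0) \in U(n-1)$, and both maps reduce to the identity, contributing $e^{2i(n-1)\pi t}$, whence total winding $n-1$.

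The main difficulty is showing $\deg(f\circ\tilde\psi) = 0$ for a path $\tilde\psi$ that is not itself a loop in $\Sp(\R^{2n},\Omega_0)$: null-homotopy of $f$ as an unpointed map only yields vanishing on honest loops in $\Sp(\R^{2n},\Omega_0)$, whereas here $\tilde\psi$ is a path on which $f\circ\tilde\psi$ happens to close up in $S^1$. The loop-concatenation argument circumvents this by reducing the problem to two canonical computations, one for each endpoint $W^\pm$; property~(3) is indispensable for the $W^-$ case, as no admissible path ending at $W^-$ can be confined to $U(n)$, where $f$ would be automatically trivial.
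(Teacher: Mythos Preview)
Your argument is correct and takes a genuinely different route from the paper's. The paper proves Proposition~\ref{compdef} by invoking the axiomatic characterization of $\mu_{\textrm{CZ}}$ (Proposition~\ref{caract}): it checks that $\psi\mapsto\deg(\tilde\rho^2\circ\tilde\psi)$ satisfies the Homotopy property, the Loop property (any loop in $\Sp$ deforms into $U(n)$, where $\tilde\rho=\det_\C$), and the relevant Signature values on $\psi_{2\pm}$, and then concludes by uniqueness. You instead compare $\tilde\rho^2$ with $\rho^2$ directly through the quotient $f=\tilde\rho^2/\rho^2$: since $f\equiv 1$ on the deformation retract $U(n)$, $f$ is nullhomotopic, so $f_\ast=0$ on $\pi_1$; together with $f(\Id)=f(W^\pm)=1$ this makes $f\circ\tilde\psi$ a based loop whose degree depends only on the endpoint, and two explicit admissible paths (one in $U(n)$ for $W^+$, and $\psi_{2-}$ extended constantly for $W^-$) force that degree to vanish. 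Both proofs rest on the same geometric input---agreement of $\tilde\rho$ and $\rho$ on $U(n)$ and the retraction of $\Sp$ onto $U(n)$---but yours is more self-contained, bypassing the axiomatic detour, while the paper's makes transparent \emph{why} hypotheses~(1)--(3) are the natural ones: they are exactly what is needed to reproduce the Loop and Signature axioms. One minor wording issue: in your check of property~(3) for the explicit maps, ``contributing $1$ to the winding'' should say ``contributing the constant value $1$ to $\tilde\rho^2$'' (hence $0$ to the degree).
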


In \cite{RobbinSalamon}, Robbin and Salamon define a Maslov-type index for a continuous path $\Lambda$ 
from the interval $[a,b]$ to the space $\mathcal{L}_{(W,\overline{\Omega})} $ of Lagrangian subspaces of a symplectic vector space $(W,\overline{\Omega})$, having chosen a reference Lagrangian $L$.
They give a formula of this index for a path having only regular crossings.
A crossing for $\Lambda$ is a number $t \in [a,b]$ for which $\dim \Lambda_t \cap L \neq 0,$
and a  crossing $t$ is regular if the crossing form $\Gamma (\Lambda , L , t)$ is nondegenerate.
We recall the precise definitions in section \ref{RSLagr}.

Robbin and Salamon define the index of a continuous path of symplectic matrices $\psi :  [0,1] \rightarrow \Sp(\R^{2n},\Omega_0) : t\mapsto \psi_t$  as the  index of the corresponding path of Lagrangians
	in $(W :=\R^{2n}\times\R^{2n} , \overline{\Omega} = -\Omega_0 \times \Omega_0)$ defined by their graphs, 
	\begin{equation*}
	\Lambda= \Gr\psi : [0,1] \rightarrow \mathcal{L}_{(W,\overline{\Omega})} : t \mapsto \Gr\psi_t= \{ (x , \psi_t x) \vert x \in \R^{2n} \}.
	\end{equation*}
The reference Lagrangian is the diagonal $\Delta= \{ (x,x) \vert x \in \R^{2n} \}$.
They prove that this index coincide with the Conley Zehnder index on continuous paths of symplectic matrices which start from the identity and end at a matrix which does not admit $1$ as an eigenvalue.
To be complete, we include this in section \ref{section:RSsympl}. They also prove that this index vanishes on a path
of symplectic matrices with constant dimensional $1$-eigenspace.
Robbin and Salamon  present also another way to associate an index to a continuous path $\psi$ of symplectic matrices.  One chooses a Lagrangian $L$  in $\mathcal{L}_{(\R^{2n},\Omega_0})$ and one considers the 
index  of the path of Lagrangians $ t \mapsto\psi_t L$,  with $L$ as the reference Lagrangian.
We show in section \ref{sect:otherRSindex} that those two indices do not coincide in general.

We use  the normal form of the restriction of a symplectic endomorphism to the generalized eigenspace of eigenvalue $1$ obtained in \cite{Gutt} to construct 
special paths of symplectic endomorphisms with a constant dimension of the eigenspace  of eigenvalue $1$. This leads in section \ref{ssection:char} to a characterization 
of the generalized half-integer valued Conley Zehnder index defined   by Robbin and Salamon :
\begin{theorem}\label{thm:Thechar}
	The Robbin-Salamon index for a continuous path of symplectic matrices is characterized by the following properties:
	\begin{itemize}
		\item\emph{(Homotopy)} it is invariant under homotopies with fixed end points;
		\item\emph{(Catenation)} it is additive under catenation of paths;
		\item\emph{(Zero)} it vanishes on any path $\psi:[a,b]\rightarrow\Sp(\R^{2n},\Omega)$ of matrices such that 
		         $\dim\Ker \bigl(\psi(t)-\Id\bigr) =k   $ is constant on $ [a,b]$;
		\item\emph{(Normalization)} if $S=S\tr \in \R^{2n\times 2n}$ is a symmetric matrix with all eigenvalues of absolute value
			$<2\pi $ and if $\psi(t) = \operatorname{exp}(J_0St)$ for $t\in \left[0,1\right],$ then
			$\mu_{\textrm{RS}}(\psi) = \half \sign S$ where $\sign S$ is the signature of $S$.
	\end{itemize}
\end{theorem}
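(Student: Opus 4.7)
\emph{Existence.} For the Robbin--Salamon index, Homotopy and Catenation are intrinsic properties of any Maslov-type index, while Zero is the vanishing result on paths with constant $1$-eigenspace dimension recalled in section \ref{section:RSsympl}. Normalization follows from a direct computation using Robbin and Salamon's crossing formula: for $\psi(t)=\exp(tJ_0 S)$ with all eigenvalues of $S$ of absolute value $<2\pi$, the graph $\Gr\psi(t)$ meets $\Delta$ only at $t=0$, and there the crossing form identifies with $S$, whence the endpoint contribution $\half\sign S$.

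\emph{Uniqueness.} Let $\mu$ be any assignment satisfying the four axioms, set $\nu=\mu-\mu_{\textrm{RS}}$, and note that $\nu$ is homotopy invariant rel endpoints, additive under catenation, vanishes on every path of constant $1$-eigenspace dimension, and vanishes on every normalization path from the fourth axiom. Given an arbitrary $\psi:[a,b]\to\Sp(\R^{2n},\Omega_0)$, I first perturb $\psi$ rel endpoints so that it has only regular crossings, by the standard transversality argument. The complement of the finite crossing set partitions $[a,b]$ into subintervals on which $\dim\Ker(\psi(t)-\Id)$ is constant, so $\nu$ vanishes on each by Zero; by Catenation I am reduced to computing $\nu$ on a short arc containing a single regular crossing.

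\emph{Reduction to a normalization path.} Consider such a short arc through a regular crossing $t_0$ and set $A=\psi(t_0)$. Using the normal form from \cite{Gutt} for the restriction of $A$ to its generalized $1$-eigenspace, I construct auxiliary paths $\alpha$ and $\beta$ of constant $1$-eigenspace dimension that move $\psi(a)$ and $\psi(b)$ to explicit canonical matrices; by Catenation and Zero, $\nu(\alpha*\psi*\beta)=\nu(\psi)$. A further homotopy with fixed endpoints and constant $1$-eigenspace dimension outside $t_0$ realizes the arc, up to concatenation with further zero-contribution segments, as an exponential flow $\exp(tJ_0 S)$ with $S$ symmetric and $\half\sign S=\mu_{\textrm{RS}}(\psi)$. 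Applying Normalization then gives $\nu(\psi)=0$. The main obstacle is precisely this explicit construction: one must show that each stratum of $\Sp(\R^{2n},\Omega_0)$ indexed by the Jordan structure of the $1$-eigenspace is connected by paths of constant $1$-eigenspace dimension, and that the local behavior near a regular crossing can be standardised as an exponential flow of the type permitted by the Normalization axiom, with the correct signature. This is exactly the role played by the normal-form results of \cite{Gutt}, which provide canonical representatives inside each stratum and control the way blocks merge or split across a crossing so as to match the signature of the crossing form with that of an explicit symmetric matrix $S$.
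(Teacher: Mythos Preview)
Your outline diverges from the paper's argument and contains a real gap at the step you yourself flag as ``the main obstacle.'' After localizing to a short arc $[t_0-\varepsilon,t_0+\varepsilon]$ around a regular interior crossing, both endpoints lie in $\Sp^\star$ (their $1$-eigenspace is $\{0\}$), while the only paths whose $\nu$-value the Normalization axiom pins down start at $\Id$, whose $1$-eigenspace has dimension $2n$. Your auxiliary paths $\alpha,\beta$ are required to have \emph{constant} $1$-eigenspace dimension, so they cannot move the endpoints of the arc to $\Id$; hence the concatenation $\alpha*\psi*\beta$ is never a normalization path, nor homotopic rel endpoints to one. Closing up via a return path in $\Sp^\star$ does not help either, since the endpoints may lie in different components of $\Sp^\star$ (precisely when $\sign\Gamma$ is odd). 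What is actually needed here is a derivation of the Loop property from the four axioms and a comparison with the Conley--Zehnder index on $\SP(n)$; merely invoking the normal forms of \cite{Gutt} does not supply this. A second, smaller issue: if $\psi(a)$ or $\psi(b)$ has a nontrivial generalized $1$-eigenspace, your perturbation rel endpoints still leaves that matrix fixed, and the endpoint contribution must be analyzed with exactly the block-by-block normal-form construction you are trying to bypass.

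The paper proceeds quite differently and never localizes to individual crossings. It first observes that, by connectivity of $\Sp(\R^{2n},\Omega_0)$ together with Catenation, it suffices to treat paths starting at $\Id$. For such a path ending at $A$, it uses Proposition~\ref{normalforms1} to build an explicit extension from $A$ into $\Sp^\star$: a segment $\psi_1$ with constant $\dim\Ker(\psi_1(t)-\Id)$ (index zero by Zero), then a symplectic shear $\psi_2$ (index $\half\sum d_j$ by the Shear/Normalization case), then a segment $\psi_3$ entering $\Sp^\star$. The extended path lies in $\SP(n)$, so its index is governed by the Conley--Zehnder characterization; the paper then checks that Homotopy, Catenation, Zero and Normalization suffice to derive the Loop property (Lemma~\ref{charRS2}), and finally shows by induction on $n$ that the Product axiom is redundant. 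If you want to salvage your crossing-by-crossing approach, you would need to carry out essentially this same global reduction in order to link each local arc to a genuine normalization path.
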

The same techniques lead in section \ref{ssection:formula} to a new formula for this index :
\begin{theorem}\label{RSexpl}
	Let $\psi : [0,1]\rightarrow\Sp(\R^{2n},\Omega_{0})$ be a path of symplectic matrices.
	Decompose $\psi(0) = \psi^{\star}(0)\diamond \psi^{(1)}(0)$ and 
	$\psi(1) = \psi^{\star}(1)\diamond \psi^{(1)}(1)$
	where $ \psi^{\star}(\cdot)$  does not admit $1$ as eigenvalue and
	$\psi^{(1)}(\cdot)$  is the restriction of $\psi(\cdot)$  to its generalized eigenspace of eigenvalue $1$.
	Consider a continuous extension  $\Psi:[-1,2]\rightarrow \Sp(\R^{2n},\Omega_{0})$ of $\psi$ such that
	\begin{itemize}
		\item  $\Psi(t)=\psi(t)$ for   $t\in [0,1]$;
		\item $\Psi\bigl(-\half\bigr) =\psi^{\star}(0)\diamond
			\left(
				\begin{smallmatrix}
					e^{-1}\Id&0\\
					0& e\Id
				\end{smallmatrix}
			\right)$
			and $\Psi(t)=\psi^{\star}(0) \diamond \phi_0(t)$ where $\phi_0(t)$  has only real positive eigenvalues
			 for $t\in \bigl[-\half,0\bigr]$;
		\item $\Psi\bigl(\frac{3}{2}\bigr)=\psi^{\star}(1)\diamond
			\left(
				\begin{smallmatrix}
					e^{-1}\Id&0\\
					0& e\Id
				\end{smallmatrix}
			\right)$
			and $\Psi(t)=\psi^{\star}(1) \diamond \phi_1(t)$ where $\phi_1(t)$  has only real positive eigenvalues
			 for  $t\in \bigl[1,\frac{3}{2}\bigr]$;
		\item $\Psi(-1)=W^{\pm}$, $\Psi(2)=W^{\pm}$ and  $\Psi(t)$ does not admit $1$ as an eigenvalue for $t\in \bigl[-1,-\half\bigr]$ and for $t\in\bigl[\frac{3}{2},2\bigr]$.
	\end{itemize}
	Then the Robbin Salamon index is given by
	$$
		\mu_{RS}(\psi) =\operatorname{deg}({\tilde{\rho}}^2\circ \Psi)+  \half \sum_{k\ge 1}\sign\Bigl(\hat{Q}_k^{(\psi(0))}\Bigr)
		 -\half \sum_{k\ge 1}\sign\Bigl(\hat{Q}_k^{(\psi(1))}\Bigr)
	$$
	with $\tilde{\rho}$ as in theorem \ref{thm:CZ}, and with 
	\begin{eqnarray*}
		\hat{Q}_k^{A} &:& \Ker\left( (A-\id)^{2k}\right)\times \Ker\left( (A-\id)^{2k}\right)\rightarrow \R\nonumber\\
		&&(v,w)\mapsto\Omega\bigl((A-\id)^{k}v,(A-\id)^{k-1}w\bigr).
	\end {eqnarray*}
\end{theorem}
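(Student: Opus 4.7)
My plan is to apply the axiomatic characterization of $\mu_{RS}$ from Theorem~\ref{thm:Thechar}. By the Catenation axiom I split
\begin{equation*}
\mu_{RS}(\Psi)=\mu_{RS}\bigl(\Psi|_{[-1,-\half]}\bigr)+\mu_{RS}\bigl(\Psi|_{[-\half,0]}\bigr)+\mu_{RS}(\psi)+\mu_{RS}\bigl(\Psi|_{[1,\frac{3}{2}]}\bigr)+\mu_{RS}\bigl(\Psi|_{[\frac{3}{2},2]}\bigr).
\end{equation*}
The two outermost pieces carry paths on which $1$ is never an eigenvalue, so $\dim\Ker(\Psi(t)-\Id)\equiv 0$ is constant and the Zero axiom annihilates them. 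On $[-\half,0]$ and $[1,\frac{3}{2}]$ the extension has the direct-sum form $\Psi(t)=\psi^{\star}\diamond\phi_i(t)$ with constant $\psi^{\star}$ having no $1$-eigenvalue. The RS index is additive under symplectic direct sums (this follows from the characterization applied to the constant summand, whose kernel dimension is $0$ throughout and so contributes nothing by Zero), hence the middle-tail contributions reduce to $\mu_{RS}(\phi_0)$ and $\mu_{RS}(\phi_1)$. This gives
\begin{equation*}
\mu_{RS}(\psi)=\mu_{RS}(\Psi)-\mu_{RS}(\phi_0)-\mu_{RS}(\phi_1).
\end{equation*}

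Next I show $\mu_{RS}(\Psi)=\deg(\tilde\rho^{2}\circ\Psi)$. Since $\Psi(-1),\Psi(2)\in\{W^{+},W^{-}\}$ and $\tilde\rho^{2}(W^{\pm})=1$ by property~(2) of Theorem~\ref{thm:CZ}, the composition $\tilde\rho^{2}\circ\Psi$ is indeed a loop in $S^{1}$. Prepend any path $\chi$ from $\Id$ to $\Psi(-1)$ with no $1$-eigenvalue for positive times; then $\chi\cdot\Psi$ starts at $\Id$ and ends at $W^{\pm}$, so Theorem~\ref{thm:CZ} yields $\mu_{CZ}(\chi\cdot\Psi)=\deg(\tilde\rho^{2}\circ(\chi\cdot\Psi))$. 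Using the coincidence $\mu_{RS}=\mu_{CZ}$ on such paths (section~\ref{section:RSsympl}), Catenation and the multiplicativity of the degree, the $\chi$-contribution cancels and the claim follows.

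The heart of the proof is the evaluation
\begin{equation*}
\mu_{RS}(\phi_0)=-\half\sum_{k\ge 1}\sign\hat Q_k^{\psi(0)},\qquad \mu_{RS}(\phi_1)=\half\sum_{k\ge 1}\sign\hat Q_k^{\psi(1)}.
\end{equation*}
Here I invoke the symplectic normal form for the restriction of a symplectic endomorphism to its generalized $1$-eigenspace from \cite{Gutt}: $\psi^{(1)}(0)$ decomposes as a symplectic direct sum of elementary unipotent blocks, and the quadratic forms $\hat Q_k^{\psi(0)}$ are block-diagonal in that decomposition. I construct $\phi_0$ block by block, joining $\operatorname{diag}(e^{-1}\Id,e\Id)$ on each block to the corresponding elementary block of $\psi^{(1)}(0)$ through matrices with only real positive eigenvalues. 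Within this class, the Homotopy axiom allows me to deform each block-path, with fixed endpoints, to an exponential $t\mapsto \exp(tJ_{0}S)$ with $\|S\|<2\pi$, on which Normalization gives $\half\sign S$. An explicit computation in normal-form coordinates identifies $\sign S$ with $-\sign \hat Q_k$ on the corresponding block, and summing over blocks yields the formula for $\phi_0$; the analogous reverse construction for $\phi_1$ accounts for the opposite overall sign. Combining everything produces the claimed identity.

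The main obstacle is this block-by-block matching of $\sign J_{0}S$ with $-\sign\hat Q_k$: for each elementary unipotent normal-form block one must exhibit an explicit path through positive-eigenvalue symplectic matrices, homotope it to an exponential, and identify the signature of the generator with the signature of the intrinsic form $(v,w)\mapsto\Omega\bigl((A-\Id)^{k}v,(A-\Id)^{k-1}w\bigr)$. This is a low-dimensional model calculation made uniform by the normal form of \cite{Gutt}; everything else is a formal application of the axioms.
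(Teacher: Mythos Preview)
Your overall architecture coincides with the paper's: split $\mu_{RS}(\Psi)$ by Catenation, kill the outer segments by the Zero axiom, strip off the constant $\psi^\star$ factor on the tails, identify $\mu_{RS}(\Psi)$ with $\deg(\tilde\rho^{\,2}\circ\Psi)$ by prepending a path from $\Id$, and evaluate $\mu_{RS}(\phi_i)$ via the normal form of \cite{Gutt}. The paper carries out this last step by explicitly building paths $\psi_{\mathrm I},\psi_{\mathrm{II}},\psi_{\mathrm{III}},\psi_{\mathrm{IV}}$ from the unipotent normal form to $\operatorname{diag}(e^{-1}\Id,e\Id)$: a conjugation (index $0$), a deformation with constant $1$-eigenspace dimension (index $0$ by Zero), a symplectic shear (index $\tfrac12\sum_j d_j$ by Proposition~\ref{RSforshears}), and a scaling from $\Id$ to the diagonal (index $0$); it then applies Proposition~\ref{normalforms1} to convert $\sum_j d_j$ into $\sum_k\sign\hat Q_k$.

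Your step 5 has a genuine imprecision. A path $t\mapsto\exp(tJ_0S)$ starts at $\Id$, not at $\operatorname{diag}(e^{-1}\Id,e\Id)$, so you cannot homotope the block-path to a single such exponential while keeping the endpoints fixed. You must insert a stopover at $\Id$ (the scaling piece $\operatorname{diag}(e^{-t},e^{t})$ has generator $S=\left(\begin{smallmatrix}0&1\\1&0\end{smallmatrix}\right)$ of signature $0$, hence index $0$) and, for Jordan blocks of size $2r_j>2$, a further reduction to the $2\times 2$ shear keeping $\dim\Ker(\cdot-\Id)$ constant; this is exactly the paper's $\psi_{\mathrm{IV}}$ and $\psi_{\mathrm{II}}$. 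You should also state explicitly why the \emph{given} $\phi_i$ from the theorem is homotopic rel endpoints to the one you construct: matrices with only real positive eigenvalues have $\rho\equiv 1$, so any two such paths with common endpoints differ by a contractible loop in $\Sp(\R^{2n},\Omega_0)$. With these clarifications your argument matches the paper's.
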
 
In the theorem above, we have used the notation $A\diamond B$ for the symplectic direct sum of two symplectic endomorphisms  with the natural identification of   $\Sp(V',\Omega')\times \Sp(V'',\Omega'' )$ as a subgroup of $\Sp(V'\oplus V'',\Omega'\oplus \Omega'')$. This writes in symplectic basis as $$
A\diamond B:=	\left(\begin{smallmatrix}
		A_1 & 0 & A_2 & 0\\
		0 & B_1 & 0 & B_2\\
		A_3 & 0 & A_4 & 0\\
		0 & B_3 & 0 & B_4
	\end{smallmatrix}\right) \quad \textrm{ for } A=\left(\begin{smallmatrix}
	A_1 & A_2\\
	A_3 & A_4
\end{smallmatrix}\right), \quad  B=\left(\begin{smallmatrix}
	B_1 & B_2\\
	B_3 & B_4
\end{smallmatrix}\right) .
$$

This paper is organized as follows.
We recall the  definition of the Conley-Zehnder index in section \ref{CZindex} and obtain a new way of computing this index in Proposition \ref{compdef} and its corrolaries
(stated above as Theorem \ref{thm:CZ}). In sections \ref{RSLagr} and \ref{section:RSsympl}, we present  known results about the Robbin Salamon index of a  path of Lagrangians and the Robbin Salamon index of a path of symplectic matrices, including the fact that it is a generalization of the Conley-Zehnder index; in section \ref{sect:otherRSindex}, we stress the fact  that another index introduced by Robbin and Salamon does not coincide with this generalization of the
Conley-Zehnder index.
In section \ref{ssection:char}, we give a characterization of the generalization of the Conley-Zehnder index (stated above as Theorem \ref{thm:Thechar}).
Section \ref{ssection:formula} gives a new formula to compute this  index (stated above as Theorem \ref{RSexpl}).

\section*{Acknowledgements}
I thank Fr\'ed\'eric Bourgeois and Alexandru Oancea who introduced me to this subject and  encouraged me to write this text,
and I thank Mihai Damian for the question which has lead to theorem \ref{RSexpl}.
I am grateful to the  Foundation for Scientific Research (FNRS-FRS) for its support.

\section{The Conley-Zehnder index}\label{CZindex}
The Conley-Zehnder index is an application which associates a integer to a continuous path of symplectic matrices starting from the identity and ending at a matrix  in the set $\Sp^\star(\R^{2n},\Omega_0)$ of symplectic matrices which do  not admit $1$
as an eigenvalue. 
\begin{definition}[\cite{SalamonZehnder, Salamon}]
	We consider the set $\SP(n)$ of continuous paths of matrices in $\Sp(\R^{2n},\Omega_0)$ linking the matrix $\Id$ to a matrix in $\Sp^\star(\R^{2n},\Omega_0) :$
	\begin{equation*}
		\SP(n) := \Biggl\{ \psi : \left[0,1\right] \rightarrow \Sp(\R^{2n},\Omega_0)\, \left\vert
		\begin{array}{l}
			 \psi(0)=\Id \textrm{ and }\\
 			1\textrm{ is not an eigenvalue of } \psi(1)
		\end{array}
		\right. \Biggr\}.
	\end{equation*}
\end{definition}
\begin{definition}[\cite{SalamonZehnder, Audin}]
	Let $\rho: \Sp(\R^{2n},\Omega_0)\rightarrow S^1$ be the continuous map defined as follows.
	Given $A\in \Sp(\R^{2n},\Omega)$,
	we consider its eigenvalues $\{ \lambda_i\}$.
	For an eigenvalue $\lambda=e^{i\varphi}\in S^1\setminus\{\pm 1\},$ let  $m^+(\lambda)$ be the number  of positive eigenvalues of the symmetric  non degenerate $2$-form $Q$
	defined on the generalized eigenspace $E_\lambda$ by
	\begin{equation*}
		Q: E_\lambda \times E_\lambda \rightarrow \R :~ (z,z') \mapsto Q(z,z'):=\Imc \Omega_0(z,\overline{z'}).
	\end{equation*}
	Then
	\begin{equation}\label{eqrho}
		{\rho}(A):= (-1)^{\half m^-} \prod_{\lambda\in S^1\setminus\{\pm1\}}\lambda^{\half m^+(\lambda)}
	\end{equation}
	where $m^-$ is the sum of the algebraic multiplicities $m_\lambda=\dim_\C E_\lambda$ of the real negative eigenvalues.
\end{definition}
\begin{proposition}[\cite{SalamonZehnder, Audin}]
	The map $\rho: \Sp(\R^{2n},\Omega_0)\rightarrow S^1$ has the following properties:
	\begin{enumerate}
		\item\label{rho1}[determinant]   $\rho$ coincides with  $\det_\C$ on the unitary subgroup 
			\begin{equation*}
				\rho(A) = {\det}_\C A\textrm{ if } A \in  \Sp(\R^{2n},\Omega_0) \cap \textrm{O}(2n) = \textrm{U}(n);
			\end{equation*}
		\item\label{rho2}[invariance] $\rho$ is invariant under conjugation :
			\begin{equation*}
				\rho(kAk^{-1})=\rho(A)\ \forall k \in \Sp(\R^{2n},\Omega_0);
			\end{equation*}
		\item\label{rho3}[normalisation] $\rho(A)=\pm1$ for matrices which  have no eigenvalue on the unit circle;
		\item\label{rho4}[multiplicativity] $\rho$ behaves multiplicatively with respect to direct sums :  if
			$A=A'\diamond A''$
						with $A'\in \Sp(\R^{2m},\Omega_0)$,  $A''\in \Sp(\R^{2(n-m)},\Omega_0)$
			and $\diamond$  expressing as before the obvious identification of $\Sp(\R^{2m},\Omega_0)\times \Sp(\R^{2(n-m)},\Omega_0 )$ with a subgroup of $\Sp(\R^{2n},\Omega_0)$ then 
			\begin{equation*}
				\rho(A)=\rho(A')\rho(A'').
			\end{equation*}
	\end{enumerate}
\end{proposition}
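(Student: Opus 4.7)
I would prove the four properties in the order (\ref{rho2}) invariance, (\ref{rho4}) multiplicativity, (\ref{rho3}) normalisation, (\ref{rho1}) determinant, since each step will feed into the next.

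For invariance, I would observe that for $k\in \Sp(\R^{2n},\Omega_0)$ the matrices $A$ and $kAk^{-1}$ share characteristic polynomial, and the generalized eigenspaces transform by $E_\lambda(kAk^{-1})=kE_\lambda(A)$. Because $k$ is real, complex conjugation commutes with $k$, so $k\overline{z'}=\overline{kz'}$; because $k$ is symplectic, $\Omega_0(kz,k\bar{z'})=\Omega_0(z,\bar{z'})$. Hence $k$ is an isometry between the forms $Q$ on $E_\lambda(A)$ and $E_\lambda(kAk^{-1})$, so $m^+(\lambda)$ and the total $m^-$ are invariant. Multiplicativity is similar and formal: if $A=A'\diamond A''$, the characteristic polynomial factors, $E_\lambda(A)=E_\lambda(A')\oplus E_\lambda(A'')$, and because $\Omega_0$ on $\R^{2n}=\R^{2m}\oplus \R^{2(n-m)}$ is the direct sum of the two symplectic forms, $Q$ is block-diagonal in this splitting, so signatures and the counts $m^-$ add.

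For the normalisation property, if $A$ has no eigenvalue on $S^1$ then the product in (\ref{eqrho}) is empty, so $\rho(A)=(-1)^{m^-/2}$. I would finish by noting that the real negative eigenvalues of a symplectic matrix come in pairs $\{\mu,1/\mu\}$ with the same algebraic multiplicity (a standard consequence of $A\in\Sp$, since the characteristic polynomial satisfies $\lambda^{2n}p(1/\lambda)=p(\lambda)$), so the contribution from each such pair to $m^-$ is even; $m^-$ itself is even and the expression $(-1)^{m^-/2}$ is a well-defined sign.

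The determinant property is the only step with content. I would first use invariance to replace $A\in U(n)$ by a unitary diagonalization: pick $k\in U(n)\subset \Sp(\R^{2n},\Omega_0)$ so that $kAk^{-1}$ is block-diagonal with $2\times 2$ blocks $R_j$, each representing multiplication by an eigenvalue $\mu_j=e^{i\theta_j}\in S^1$. By multiplicativity, $\rho(A)=\prod_j\rho(R_j)$, so everything reduces to the one-block case, where the answer should be $\mu_j$. The cases $\mu_j=\pm 1$ are immediate ($\rho(\Id)=1$ since no eigenvalues lie in $S^1\setminus\{\pm 1\}$ and $m^-=0$; $\rho(-\Id)=(-1)^{2/2}=-1$). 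For $\mu_j\in S^1\setminus\{\pm 1\}$ I would compute directly: the generalized eigenspace $E_{\mu_j}\subset\C^2$ is complex $1$-dimensional, spanned by a vector $z$, and $Q$ is $\R$-bilinear on it of real dimension $2$ (with basis $\{z,iz\}$). A short calculation using $\Omega_0=\left(\begin{smallmatrix}0&1\\-1&0\end{smallmatrix}\right)$ gives $Q(z,z)=Q(iz,iz)=\pm 2$ and $Q(z,iz)=0$, the sign depending on whether $\Imag \mu_j>0$ or $<0$, so $m^+(\mu_j)+m^+(\bar\mu_j)=2$ with all of it concentrated at the eigenvalue in the appropriate half-plane. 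Either way, $\mu_j^{m^+(\mu_j)/2}\bar\mu_j^{m^+(\bar\mu_j)/2}=\mu_j$, and multiplying over $j$ yields $\rho(A)=\prod_j\mu_j={\det}_\C A$.

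The main obstacle, and the only place care is needed, is the final $2\times 2$ computation: one must remember that $Q$ is $\R$-bilinear (not $\C$-bilinear) on $E_\lambda$, so its signature is counted relative to the real dimension $2\dim_\C E_\lambda$. The fact that multiplication by $i$ is a $Q$-isometry (checked by $Q(iz,iz')=\Imag\,\Omega_0(iz,-i\bar{z'})=\Imag\,\Omega_0(z,\bar{z'})=Q(z,z')$) is what guarantees that $m^+(\lambda)$ is always even, so that the half-power $\lambda^{m^+(\lambda)/2}$ in (\ref{eqrho}) is unambiguous and the matching with $\det_\C$ is exact rather than only up to sign.
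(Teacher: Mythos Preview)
The paper does not supply its own proof of this proposition; it is stated with a citation to Salamon--Zehnder and Audin and then used as input. So there is no argument in the paper to compare yours against.

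Your proof is correct and follows the standard route one finds in those references: conjugation invariance and multiplicativity are formal consequences of the fact that eigenspaces, multiplicities, and the form $Q$ transform compatibly; normalisation follows because negative real eigenvalues off $S^1$ come in pairs $\{\mu,\mu^{-1}\}$ of equal multiplicity; and the determinant property is reduced, via unitary diagonalisation and multiplicativity, to a single $2\times 2$ rotation block. The observation that multiplication by $i$ is a $Q$-isometry, hence that $m^+(\lambda)$ is always even, is exactly the point needed to make the half-powers in~(\ref{eqrho}) unambiguous.

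One small imprecision: in the $2\times2$ computation you write that the sign of $Q$ on $E_{\mu_j}$ ``depends on whether $\Imag\mu_j>0$ or $<0$''. In fact, once $\mu_j$ is fixed as the complex number by which the block $R_j$ acts (so $\mu_j=\det_\C R_j=e^{i\theta_j}$), the eigenvector for $\mu_j$ is $(1,-i)$ regardless of the sign of $\theta_j$, and $Q$ is positive definite on $E_{\mu_j}$ and negative definite on $E_{\bar\mu_j}$ independently of which half-plane $\mu_j$ lies in. Your ``either way'' hedge and final conclusion $\mu_j^{m^+(\mu_j)/2}\bar\mu_j^{\,m^+(\bar\mu_j)/2}=\mu_j$ are correct; only the attribution of the sign dependence is slightly off.
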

The  construction  \cite{Salamon, Audin} of the Conley-Zehnder  index 
 is based on the two following facts  
 \begin{itemize}
\item $\Sp^\star(\R^{2n},\Omega_0)$ has two connected components, one containing the  matrix $W^+:=-\id$
and the other containing $$W^-:=\operatorname{diag}(2,-1,\ldots,-1,\half,-1,\ldots,-1);$$
\item any loop in $\Sp^\star(\R^{2n},\Omega_0)$ is contractible in $\Sp(\R^{2n},\Omega_0)$.
 \end{itemize}
Thus any path $\psi:[0,1] \rightarrow \Sp(\R^{2n},\Omega_0)$ in $\SP(n)$ can be extended
to a path $\widetilde{\psi}[0,2] \rightarrow \Sp(\R^{2n},\Omega_0)$ so that 
\begin{itemize}
\item $\widetilde{\psi}(t)={\psi}(t)$ for $t\le 1$;
\item $\widetilde{\psi}(t)$ is in
$\Sp^\star(\R^{2n},\Omega_0)$ for any $t\ge 1$; 
\item $\widetilde{\psi}(2)=W^\pm.$
\end{itemize}
Observe that $\bigl(\rho(\id)\bigr)^2=1$ and $\bigl(\rho(W^\pm)\bigr)^2=1$ so that $\rho^2\circ \widetilde{\psi}:[0,2]\rightarrow S^1$
is a loop in $S^1$ and the contractibility property shows that its degree does not depend on the extension chosen.
\begin{definition}
The Conley-Zehnder index of $\psi$ is defined by:
\begin{equation}
\mu_{\textrm{CZ}} :   \SP(n) \rightarrow \Z :~\psi \mapsto    \mu_{\textrm{CZ}}(\psi) :=\deg (\rho^2\circ \widetilde{\psi})
\end{equation}
for an extension $\widetilde{\psi}$ of $\psi$ as above.
\end{definition}
\begin{proposition}[\cite{Salamon, Audin}]\label{proprietescz}
	The Conley-Zehnder index
		has the following properties:
	\begin{enumerate}
		\item\label{cznaturalite} \emph{(Naturality)} For all path $\phi : \left[0,1\right] \rightarrow \Sp(\R^{2n},\Omega_0)$\ we have
		 	\begin{equation*}
				\mu_{\textrm{CZ}}(\phi\psi\phi^{-1}) = \mu_{\textrm{CZ}}(\psi);
			\end{equation*}
		\item\label{czhomotopy} \emph{(Homotopy)} The Conley-Zehnder index is constant on the components of $ \SP(n);$
		\item \emph{(Zero)} If $\psi(s)$ has no eigenvalue on the unit circle for $s>0$ then
			\begin{equation*}
				\mu_{\textrm{CZ}}(\psi) =0;
			\end{equation*}
		\item \emph{(Product)} If $n'+n''=n,$ , if $\psi'$ is in $\SP(n')$ and $\psi''$in $\SP(n'')$, then 
			\begin{equation*}
				\mu_{\textrm{CZ}}(\psi' \diamond \psi'') =  \mu_{\textrm{CZ}}(\psi') +  \mu_{\textrm{CZ}}(\psi'');
			\end{equation*}
			with the identification of $\Sp(\R^{2n'},\Omega_0)\times \Sp(\R^{2n''},\Omega_0)$ with a subgroup of  $\Sp(\R^{2n},\Omega_0)$;
		\item\label{czlacet} \emph{(Loop)} If $\phi : \left[0,1\right] \rightarrow \Sp(\R^{2n},\Omega_0)$ is a loop with $\phi(0) = \phi(1) = \Id ,$ then
			\begin{equation*}
				\mu_{\textrm{CZ}}(\phi\psi) = \mu_{\textrm{CZ}}(\psi) + 2\mu(\phi)
			\end{equation*}
			where $ \mu(\phi)$ is the Maslov index of the loop $\phi$, i.e.  $\mu(\phi) = \operatorname{deg}(\rho \circ \phi);$
		\item\label{czsignature} \emph{(Signature)} If $S=S\tr \in \R^{2n\times 2n}$ is a symmetric non degenerate matrix with all eigenvalues of absolute value $<2\pi \ (\|S\| < 2\pi)$ and if 
			$\psi(t) = \operatorname{exp}(J_0St)$ for $t\in \left[0,1\right],$ then $\mu_{\textrm{CZ}}(\psi) = \half \sign(S)$ $(\textrm{where } \sign(S) \textrm{ is the signature of }S) .$
		\item\label{determinant} \emph{(Determinant)} $(-1)^{n- \mu_{CZ}(\psi)} = \operatorname{sign}\det\bigr(\Id - \psi(1)\bigl)$
		\item\label{inverse} \emph{(Inverse)} $\mu_{CZ}(\psi^{-1}) = \mu_{CZ}(\psi\tr) = -\mu_{CZ}(\psi)$
	\end{enumerate}
\end{proposition}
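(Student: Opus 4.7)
The strategy is uniform: each property reduces to one of the structural properties of $\rho$ (determinant on $U(n)$, conjugation invariance, values in $\{\pm 1\}$ off the unit circle, multiplicativity under $\diamond$), combined with standard facts about the degree of maps $S^1\to S^1$. I would treat the eight items in four groups, leaving the genuinely computational content to (Signature) and (Determinant) at the end.

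\textbf{Naturality, Homotopy, Zero, Product.} For (Naturality), extend $\phi$ arbitrarily to $[0,2]$ (e.g., keep it constant on $[1,2]$) and extend $\psi$ to $\widetilde\psi$; then $\phi\widetilde\psi\phi^{-1}$ is an extension of $\phi\psi\phi^{-1}$ (its endpoint is in $\Sp^\star$ since conjugation preserves eigenvalues), and the conjugation invariance of $\rho$ makes $\rho^2$ coincide pointwise, hence the degrees agree. (Homotopy) follows because a homotopy in $\SP(n)$ extends to a continuous family of extensions and the degree is homotopy invariant. For (Zero), extend $\psi$ within $\Sp^\star$ to $W^\pm$; by property \ref{rho3}, $\rho^2\circ\widetilde\psi$ takes values in $\{\pm 1\}$ for $s>0$, so the resulting $S^1$-loop is null-homotopic. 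For (Product), extend each factor to a $W^\pm$ of matching dimension; the direct sum $\widetilde\psi'\diamond\widetilde\psi''$ ends in $\Sp^\star(\R^{2n},\Omega_0)$, and a further path inside $\Sp^\star$ (contributing zero by (Zero)) brings the endpoint to a standard $W^\pm$ of $\Sp(\R^{2n},\Omega_0)$; multiplicativity of $\rho$ then gives additivity of degrees.

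\textbf{Loop and Inverse.} For (Loop), the pointwise product $\phi\psi$ is homotopic with fixed endpoints to the concatenation $\phi*\psi$ (an Eckmann-Hilton argument in the topological group $\Sp(\R^{2n},\Omega_0)$). An extension of $\phi*\psi$ to $[0,2]$ is given by $\phi$ on $[0,1]$ followed by $\widetilde\psi$ on $[1,2]$, and the degree of $\rho^2$ over this path splits as $\deg(\rho^2\circ\phi)+\mu_{\textrm{CZ}}(\psi)=2\mu(\phi)+\mu_{\textrm{CZ}}(\psi)$, since $\rho^2\circ\phi$ is a genuine loop in $S^1$ of degree $2\mu(\phi)$. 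For (Inverse), $A^{-1}$ has eigenvalues reciprocal to those of $A$, which on $S^1$ are complex conjugates, so $\rho(A^{-1})=\overline{\rho(A)}$; hence $\rho^2\circ\widetilde{\psi^{-1}}$ is the pointwise inverse of $\rho^2\circ\widetilde\psi$ in $S^1$ and its degree changes sign. The transpose case then follows from $\psi\tr=J_0\psi^{-1}J_0^{-1}$ combined with (Naturality).

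\textbf{Signature and Determinant.} The (Signature) property is the only genuinely computational step and the main obstacle. I would reduce to small symplectic blocks by putting $J_0S$ in a Williamson-type normal form in a suitable symplectic basis; (Naturality) and (Product) make this reduction lossless. On each block the eigenvalues of $\exp(tJ_0S)$ can be written down explicitly, and the bound $\|S\|<2\pi$ ensures $\psi(s)$ does not re-acquire $1$ as an eigenvalue for $s\in(0,1]$ and restricts the total rotation angle; tracking $\rho^2$ along the path then shows that each positive (respectively negative) eigenvalue of $S$ contributes $+\half$ (respectively $-\half$), totaling $\half\sign S$. Finally, (Determinant) is formal: by (Loop) and (Homotopy) the parity of $\mu_{\textrm{CZ}}(\psi)$ depends only on the component of $\Sp^\star(\R^{2n},\Omega_0)$ containing $\psi(1)$, and $\sign\det(\Id-\psi(1))$ is likewise determined by that component, so it suffices to verify the identity on two model paths ending at $W^+$ and $W^-$, both realizable as $\exp(tJ_0S)$ for an explicit $S$ and evaluated via (Signature).
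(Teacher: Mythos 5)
The paper does not actually prove this proposition: it is quoted from Salamon and Audin with a citation, and the only proof in this section is that of the subsequent characterization (Proposition \ref{caract}). So your proposal has to stand on its own; it does follow the standard route of those references (reduce each item to the listed properties of $\rho$ plus elementary degree theory), and the items Naturality, Homotopy, Loop, Inverse and Determinant are argued correctly, modulo the routine remark that an extension ending at a conjugate of $W^{\pm}$ can be completed to one ending at $W^{\pm}$ itself without changing the degree.

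Two steps need repair. First, in the \emph{Zero} and \emph{Product} arguments you let the extension run all the way to $W^{\pm}$ while claiming $\rho^2$ stays in $\{\pm 1\}$ because the matrices have no eigenvalue on the unit circle. But $W^{+}=-\Id$ and, for $n\ge 2$, also $W^{-}$ have $-1$ as an eigenvalue, so the tail of the extension cannot remain in the set where property \ref{rho3} applies. Moreover $W^{-}\diamond W^{-}$ is \emph{not} $W^{-}$ of double size --- it even lies in the component of $W^{+}$, since $\det\bigl(\Id-(W^{-}\diamond W^{-})\bigr)>0$ --- and the connecting path you append cannot be said to ``contribute zero by (Zero)'': the Zero property concerns paths starting at $\Id$, not tails of extensions. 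The correct patch is the observation that $\rho^2\equiv 1$ on the set of symplectic matrices all of whose eigenvalues are real (the product in \eqref{eqrho} is then empty and $m^-$ is even), together with a connectivity argument joining $\psi(1)$, respectively the direct sum of the two endpoints, to $W^{\pm}$ inside that set. Second, the \emph{Signature} property, which you rightly identify as the computational core, is left as a sketch, and ``Williamson-type normal form'' is the wrong pointer: Williamson's theorem concerns positive definite $S$, whereas here $S$ is merely nondegenerate symmetric. The standard reduction uses instead the connectedness of the set of nondegenerate symmetric matrices of fixed signature with all eigenvalues in $(-2\pi,2\pi)$, which lets one deform $S$ to $\operatorname{diag}(\pm\varepsilon,\ldots,\pm\varepsilon)$ through such matrices (a homotopy in $\SP(n)$, since $\|S\|<2\pi$ and nondegeneracy keep $1$ out of the spectrum of $\exp(tJ_0S)$ for $t>0$) and then compute $\exp(tJ_0S)$ blockwise: a $2\times 2$ rotation block contributing $1$ to the degree when the two matched diagonal entries are positive, its inverse when both are negative, and a hyperbolic block with $\rho=1$ when the signs are mixed. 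With these two repairs the proposal is a complete proof.
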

\begin{proposition}[\cite{Salamon, Audin}]\label{caract}
	The properties \ref{czhomotopy}, \ref{czlacet} and \ref{czsignature} of homotopy, loop and signature characterize the Conley-Zehnder index.
\end{proposition}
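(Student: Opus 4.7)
The plan is to take any $\mu:\SP(n)\to\Z$ satisfying Homotopy, Loop and Signature and argue that $\mu = \mu_{\textrm{CZ}}$, by combining the three axioms to reduce the difference $\mu - \mu_{\textrm{CZ}}$ to a function of the endpoint component in $\Sp^\star(\R^{2n},\Omega_0)$ and then using Signature to pin that function down on each of the two components. The starting observation is that $\mu_{\textrm{CZ}}$ itself satisfies the three axioms by Proposition \ref{proprietescz}, so the difference inherits invariance under the same operations.

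The first step is to apply Loop to both indices: since each transforms by the same correction $2\mu(\phi)$ under pointwise multiplication by a loop $\phi$ based at $\Id$, their difference is loop-invariant, $(\mu-\mu_{\textrm{CZ}})(\phi\psi) = (\mu-\mu_{\textrm{CZ}})(\psi)$. For any two paths $\psi_0,\psi_1\in\SP(n)$ sharing an endpoint $\psi_0(1)=\psi_1(1)$, the pointwise ratio $\phi(t):=\psi_1(t)\psi_0(t)^{-1}$ is a loop at $\Id$ and $\psi_1 = \phi\psi_0$, so $(\mu-\mu_{\textrm{CZ}})(\psi)$ depends on $\psi$ only through $\psi(1)$. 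Next, given a path $\gamma:[0,1]\to\Sp^\star$ from $\psi_0(1)$ to some $A_1$, the reparametrized family
\[ H(s,t) = \psi_0\bigl(t/(1-s)\bigr) \text{ for } t\le 1-s, \qquad H(s,t) = \gamma\bigl((t-1+s)/s\bigr) \text{ for } t\ge 1-s, \]
is a continuous homotopy in $\SP(n)$ connecting $\psi_0$ to its concatenation with $\gamma$, with $H(s,1) = \gamma(s) \in \Sp^\star$ throughout; invoking Homotopy, $(\mu-\mu_{\textrm{CZ}})(\psi)$ therefore depends on $\psi$ only through the component of $\psi(1)$ in $\Sp^\star$.

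Since $\Sp^\star(\R^{2n},\Omega_0)$ has exactly two components, containing $W^+$ and $W^-$, the proof reduces to exhibiting one Signature-path ending in each and checking that $\mu$ and $\mu_{\textrm{CZ}}$ agree on it. For $W^+$ I would take $S_+ = \pi\Id$ on $\R^{2n}$: symmetric, nondegenerate, with $\|S_+\| = \pi < 2\pi$; then $\psi_+(t) = \exp(tJ_0 S_+)$ satisfies $\psi_+(1) = \exp(\pi J_0) = -\Id = W^+$, and Signature gives $\mu(\psi_+) = \thalf \sign S_+ = n = \mu_{\textrm{CZ}}(\psi_+)$. For $W^-$ I would decompose $\R^{2n}$ into $n$ symplectic planes and take $S_-$ block-diagonal with first block $\operatorname{diag}(-\epsilon, \epsilon)$ for some $\epsilon \in (0,\pi)$ (producing a hyperbolic block with eigenvalues $e^{\pm\epsilon}$) and the remaining $n-1$ blocks each equal to $\pi\Id_2$ (each producing $-\Id_2$); a direct sign computation of $\det(\Id - \exp(J_0 S_-))$ confirms the endpoint lies in the $W^-$ component, and Signature gives $\mu(\psi_-) = \thalf \cdot 2(n-1) = n-1 = \mu_{\textrm{CZ}}(\psi_-)$.

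The main subtlety is producing a Signature-path that ends in the $W^-$ component: the natural rotational generators $\pi\Id$ give only the $W^+$ component, so one has to mix a hyperbolic $2\times 2$ block with rotational $\pi\Id_2$ blocks and verify that the resulting endpoint is actually distinguished from $W^+$ by a component invariant such as $\sign\det(\Id - \cdot)$. The rest of the argument (the loop-quotient reduction and the endpoint-sliding homotopy) is essentially bookkeeping organized by the three axioms.
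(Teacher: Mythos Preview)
Your argument is correct and follows essentially the same route as the paper: the paper extends $\psi$ to $\tilde\psi$ ending at $W^\pm$, writes $\tilde\psi$ up to homotopy as the product of a loop $\phi$ with the signature path $\psi_2(t)=\exp(t\pi J_0 S^\pm)$, and concludes $\mu'(\psi)=2\mu(\phi)+\half\sign S^\pm$; your two-step reduction (Loop to reduce to endpoint, Homotopy to reduce to component) is a repackaging of the same decomposition, and your $S_-$ is a valid alternative to the paper's explicit $S^-$.

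One small technical slip worth fixing: your endpoint-sliding homotopy $H(s,t)$ is not well-defined at $s=1$, since the first branch involves $\psi_0(t/0)$ and the second gives $H(1,0)=\gamma(0)=\psi_0(1)\neq\Id$, breaking continuity at $(1,0)$. Replace it by the standard concatenation homotopy $H(s,t)=\psi_0\bigl((1+s)t\bigr)$ for $t\le (1+s)^{-1}$ and $H(s,t)=\gamma\bigl((1+s)t-1\bigr)$ for $t\ge(1+s)^{-1}$, which is continuous on $[0,1]^2$, keeps $H(s,0)=\Id$ and $H(s,1)=\gamma(s)\in\Sp^\star$, and interpolates between $\psi_0$ and the concatenation $\psi_0\ast\gamma$.
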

\begin{proof}
	Assume $\mu' : \SP(n) \rightarrow \Z$ is a map satisfying those properties.
	Let $\psi : [0,1] \rightarrow \Sp(\R^{2n},\Omega_0)$ be an element of $\SP(n)$;
	Since $\psi$ is in the same component of $\SP(n)$ as its prolongation $\tilde{\psi} : [0,2] \rightarrow \Sp(\R^{2n},\Omega_0)$ 	we have  $\mu'(\psi) = \mu'(\tilde{\psi}).$

Observe that 	$W^+ = \exp \pi (J_0 S^+)$ with $S^+ = \Id$ and $W^- = \exp \pi (J_0 S^-)$ with
$$
	S^- =
	\left(\begin{smallmatrix}
		0 & 0 & -\frac{\log 2}{\pi} & 0\\
		0 & \Id_{n-1} & 0 & 0\\
		-\frac{\log 2}{\pi} & 0 & 0 & 0\\
		0 & 0 & 0 & \Id_{n-1}
	\end{smallmatrix}\right).
$$
          The catenation of $\tilde{\psi}$ and $\psi_2^-$ (the path $\psi_2$ in the reverse order, i.e followed from end to beginning) when
	$\psi_2 : [0,1] \rightarrow \Sp(\R^{2n},\Omega_0) \ t \mapsto \exp t \pi J_0 S^{\pm}$ is a loop $\phi .$
	Hence $\tilde{\psi}$ is homotopic to the catenation of $\phi$ and $\psi_2 ,$ which  is homotopic to the product $\phi \psi_2 $ (see, for instance, \cite{Gutt}).
	
	Thus we have $\mu'(\psi) = \mu'(\phi \psi_2) .$
	By the loop condition $\mu'(\phi \psi_2) = \mu'(\psi_2) + 2\mu(\phi)$ and by the signature condition $\mu'(\psi_2) = \half \sign (S^{\pm}).$
	Thus 
$$
        \mu'(\psi) = 2\mu(\phi) + \half \sign (S^{\pm}).
$$ 
          Since the same is true for $\mu_{CZ}(\psi)$, this proves uniqueness.
\end{proof}
Remark that we have only used the signature property to know the value of the Conley-Zehnder index on the paths $\psi_{2\pm}:\ t\in [0,1]  \mapsto  \
\exp t \pi J_0 S^{\pm}$.
Hence we  have : 

\begin{proposition}\label{compdef}
Let $\psi\in \SP(n)$ be a  continuous path of matrices in $\Sp(\R^{2n},\Omega_0)$ linking the matrix $\Id$ to a matrix in $\Sp^\star(\R^{2n},\Omega_0)$
and let  $\widetilde{\psi} : \left[0,2\right] \rightarrow \Sp(\R^{2n},\Omega_0)$ be an extension such that $\widetilde{\psi}$ coincides with $\psi$ on the interval $\left[0,1\right],$ such that
$\widetilde{\psi}(s) \in \Sp^{\star}(\R^{2n},\Omega_0)$ for all $s\geqslant 1$ and  such that the path ends either in $\widetilde{\psi}(2) = -\Id=W^+$ either in
$\widetilde{\psi}(2) = W^- := \operatorname{diag}(2,-1,\ldots,-1,\half,-1,\ldots,-1).$
The Conley-Zehnder index of $\psi$ is equal to the integer given by the degree of the map ${\tilde{\rho}}^2 \circ \tilde{\psi} : \left[0,2\right] \rightarrow \textrm{S}^1:$
	\begin{equation}
		\mu_{\textrm{CZ}}(\psi) := \operatorname{deg}({\tilde{\rho}}^2\circ \widetilde{\psi})
	\end{equation}
	for any continuous map ${\tilde{\rho}}: \Sp(\R^{2n},\Omega_0)\rightarrow S^1$ which coincide with the (complex) determinant $\det_\C$ on $\textrm{U}\left(n\right) = \textrm{O}\left(\R^{2n}\right) \cap \Sp\left(\R^{2n},\Omega_0\right)$, such that ${\tilde{\rho}}(W^-)=\pm 1$, and such that
	\[
		\operatorname{deg}\, ({\tilde{\rho}}^2\circ \psi_{2-})= n-1  \quad \textrm{ for }\,  \psi_{2-} : t \in [0,1]\mapsto \exp t \pi J_0 S^-.
	\]		\end{proposition}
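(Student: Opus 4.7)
The plan is to adapt the uniqueness argument in the proof of Proposition \ref{caract}, carrying the general map $\tilde{\rho}$ through the computation and verifying that each of the three hypotheses on $\tilde{\rho}$ is exactly what is needed to recover the formula $2\mu(\phi) + \half\sign(S^{\pm})$ that characterizes $\mu_{CZ}(\psi)$.

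First I would check that $\tilde{\rho}^2 \circ \widetilde{\psi}$ is genuinely a loop in $S^1$, so that its degree is defined. At the initial endpoint, $\tilde{\rho}(\Id)^2 = \det_\C(\Id)^2 = 1$ by condition 1; at the terminal endpoint, either $\tilde{\rho}(W^+)^2 = ((-1)^n)^2 = 1$ (since $W^+ = -\Id$ lies in $U(n)$, so condition 1 applies) or $\tilde{\rho}(W^-)^2 = 1$ by condition 2. Independence of the degree from the particular extension will follow automatically from the closed formula produced below.

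Next, exactly as in the proof of Proposition \ref{caract}, $\widetilde{\psi}$ is homotopic with fixed endpoints to the catenation $\phi * \psi_{2\pm}$, where $\psi_{2\pm}(t) = \exp(t\pi J_0 S^{\pm})$ ends in the same connected component of $\Sp^{\star}(\R^{2n},\Omega_0)$ as $\widetilde{\psi}(2)$ and $\phi$ is the resulting loop based at $\Id$. Additivity of the degree under catenation gives
\[
\deg(\tilde{\rho}^2 \circ \widetilde{\psi}) = \deg(\tilde{\rho}^2 \circ \phi) + \deg(\tilde{\rho}^2 \circ \psi_{2\pm}).
\]
I would then evaluate each term. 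For the loop $\phi$, the deformation retraction of $\Sp(\R^{2n},\Omega_0)$ onto $U(n)$ (given by the polar decomposition) makes $\phi$ homotopic to a loop inside $U(n)$, on which both $\tilde{\rho}$ and $\rho$ coincide with $\det_\C$ by condition 1; hence $\deg(\tilde{\rho}^2\circ\phi) = \deg(\rho^2\circ\phi) = 2\mu(\phi)$. For the model paths: $\psi_{2+}(t) = \exp(t\pi J_0)$ lies entirely in $U(n)$, so a direct computation gives $\deg(\det_\C^2\circ\psi_{2+}) = n = \half\sign(S^+)$, while condition 3 is exactly the statement $\deg(\tilde{\rho}^2\circ\psi_{2-}) = n-1 = \half\sign(S^-)$.

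Combining these identities yields $\deg(\tilde{\rho}^2\circ\widetilde{\psi}) = 2\mu(\phi) + \half\sign(S^{\pm})$, which is precisely the expression derived for $\mu_{CZ}(\psi)$ in the proof of Proposition \ref{caract}. The main subtle point is the homotopy from $\widetilde{\psi}$ to the catenation $\phi * \psi_{2\pm}$, but this is the same general fact about path concatenation that is already invoked (with reference to \cite{Gutt}) in the proof of Proposition \ref{caract}, and it is manifestly independent of the choice of $\tilde{\rho}$.
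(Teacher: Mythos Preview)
Your proof is correct and follows essentially the same approach as the paper: both arguments reduce to the characterization from Proposition~\ref{caract} by showing that $\deg(\tilde{\rho}^2\circ\widetilde{\psi})$ satisfies the homotopy and loop properties (the latter via the retraction of $\Sp$ onto $U(n)$, where $\tilde{\rho}=\det_\C$) and takes the correct values on the model paths $\psi_{2\pm}$ (with $\psi_{2+}$ handled directly since it lies in $U(n)$ and $\psi_{2-}$ handled by hypothesis~3). Your version is simply more explicit in verifying the endpoint values and in unwinding the catenation formula $2\mu(\phi)+\half\sign(S^\pm)$, whereas the paper states the same conclusions more tersely.
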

\begin{proof}
This is a direct consequence of the fact that the map defined by $ \operatorname{deg}({\tilde{\rho}}^2\circ \widetilde{\psi})$
has the homotopy property, the loop property (since any loop is homotopic to a loop of unitary matrices where $\rho$ and $\det_{\C}$
coincide) and we have added what we need of the signature property to characterize the Conley-Zehnder index. Indeed  $\half \sign S^-=n-1, S^+=\Id_{2n}, \half \sign S^+=n$ and\\
$\exp t \pi J_0S^+=\exp t \pi \left(\begin{smallmatrix}
		0 & -\Id_n\\
		\Id_n&0
                \end{smallmatrix}\right)=\left(\begin{smallmatrix}
		\cos \pi t \Id_n& -\sin \pi t\Id_n\\
		\sin \pi t \Id_n& \cos \pi t \Id_n
                \end{smallmatrix}\right)$ is in $\textrm{U}\left(n\right)$ 
so that
  ${\tilde{\rho}}^2\left(\exp t \pi \left(\begin{smallmatrix}
		0 & -\Id_n\\
		\Id_n&0
                \end{smallmatrix}\right)\right)= e^{2\pi int}$ and
$\operatorname{deg}({\tilde{\rho}}^2\circ \psi_{2+})=n$.
\end{proof}
\begin{cor}
The Conley-Zehnder index of a path $\psi\in \SP(n)$ is given by
\begin{equation}\label{defdetunit}
		\mu_{\textrm{CZ}}(\psi) := \operatorname{deg}({{\det}_\C}^2\circ U\circ \widetilde{\psi})
\end{equation}
	where $U:  \Sp(\R^{2n},\Omega_0)\rightarrow \textrm{U}\left(n\right)$ is the projection defined by the polar
	decomposition $U(A)=AP^{-1}$ with $P$ the unique symmetric positive definite matrix such that $P^2=A\tr A.$
\end{cor}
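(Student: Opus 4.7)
The plan is to apply Proposition \ref{compdef} with the test map $\tilde{\rho} := \det_\C \circ U$, where $U : \Sp(\R^{2n},\Omega_0)\to \mathrm{U}(n)$ is the polar-decomposition retraction. So the job reduces to verifying the three hypotheses of that Proposition for this particular $\tilde{\rho}$.

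First I would check that $\tilde{\rho}$ restricts to $\det_\C$ on $\mathrm{U}(n)$. For $A\in \mathrm{U}(n)=\Sp(\R^{2n},\Omega_0)\cap \mathrm{O}(2n)$ one has $A\tr A=\Id$, so the unique symmetric positive definite square root is $P=\Id$ and thus $U(A)=A$; consequently $\tilde{\rho}(A)=\det_\C A$. Next I would compute $\tilde{\rho}(W^-)$: since $W^-$ is diagonal and symmetric, $(W^-)\tr W^-=(W^-)^2$, so $P=\operatorname{diag}(2,1,\ldots,1,\half,1,\ldots,1)$ and $U(W^-)=W^-P^{-1}=\operatorname{diag}(1,-1,\ldots,-1,1,-1,\ldots,-1)$. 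Under the standard $\C$-structure induced by $J_0$, this real diagonal $\pm1$ matrix corresponds to a complex diagonal matrix whose $\det_\C$ equals $(-1)^{n-1}\in\{\pm1\}$.

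The main work is the third condition, $\deg(\tilde{\rho}^2\circ\psi_{2-})=n-1$. I would exploit the fact that both the polar decomposition and $\det_\C$ are multiplicative with respect to the symplectic direct sum $\diamond$. An elementary inspection of $J_0 S^-$ shows that $\psi_{2-}$ splits as
\begin{equation*}
\psi_{2-}(t) = \phi(t)\diamond \exp(t\pi J_{0,n-1}), \qquad \phi(t)=\operatorname{diag}(2^t,2^{-t}),
\end{equation*}
where the first factor acts on the symplectic plane spanned by the first position/momentum coordinates and the second is the standard rotation block on the remaining $2(n-1)$ coordinates. The factor $\phi(t)$ is already symmetric positive definite, so $U(\phi(t))=\Id_2$ and contributes trivially; the factor $\exp(t\pi J_{0,n-1})$ is already unitary, and under the identification with $\mathrm{U}(n-1)$ it equals $e^{i\pi t}\Id_{n-1}$, so its $\det_\C$ is $e^{i\pi(n-1)t}$. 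Combining, $\tilde{\rho}^2(\psi_{2-}(t))=e^{2\pi i(n-1)t}$ has degree $n-1$ as required.

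The main obstacle is really just this last computation: unpacking the $\diamond$-decomposition and verifying multiplicativity of $\det_\C\circ U$ under it (which follows because $P_{A\diamond B}=P_A\diamond P_B$ and $\mathrm{U}(n')\diamond \mathrm{U}(n'')$ sits as a block-diagonal subgroup of $\mathrm{U}(n)$ on the complex basis). Once these three items are in hand, Proposition \ref{compdef} directly gives $\mu_{\mathrm{CZ}}(\psi)=\deg({\det_\C}^{\,2}\circ U\circ \widetilde{\psi})$.
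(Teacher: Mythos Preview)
Your proof is correct and follows essentially the same route as the paper's: apply Proposition \ref{compdef} to $\tilde\rho=\det_\C\circ U$, use that $U$ is the identity on $\mathrm{U}(n)$, and compute the degree along $\psi_{2-}$ by observing that its $\diamond$-factor $\operatorname{diag}(2^t,2^{-t})$ is already symmetric positive definite (so $U$ sends it to $\Id_2$) while the remaining factor is the unitary rotation block contributing $e^{i\pi(n-1)t}$. Your explicit verification that $\tilde\rho(W^-)=(-1)^{n-1}\in\{\pm1\}$ is a detail the paper leaves implicit (it follows from evaluating their formula for $\det_\C^2\circ U(\exp t\pi J_0 S^-)$ at $t=1$), and your remark on multiplicativity of $\det_\C\circ U$ under $\diamond$ makes precise what the paper does by direct block computation.
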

\begin{proof}
	The map $\tilde{\rho}:={{\det}_\C}\circ U$ satisfies all the  properties stated in proposition \ref{compdef};
	it is indeed continuous, coincides obviously with $\det_{\C}$ on $\textrm{U}\left(n\right)$ and we have that 
	$\exp t \pi J_{0}
	\left(
	  	\begin{smallmatrix}
			0 & -\frac{\log 2}{\pi}\\
			-\frac{\log 2}{\pi}&0
		\end{smallmatrix}
	\right) =
	\left(
		\begin{smallmatrix}
			2^t&  0\\
			0& 2^{-t} 
		\end{smallmatrix}
	\right)$
	is a positive symmetic matrix so that
	$ U(\exp t \pi J_0 S^-)=
	\left(
		\begin{smallmatrix}
			1&0&0&0\\
			0&\cos \pi t \Id_{\tiny{n-1}}& 0&-\sin \pi t\Id_{\tiny{n-1}}\\
			0&0&1&0\\
			0&\sin \pi t \Id_{\tiny{n-1}}& 0& \cos \pi t \Id_{\tiny{n-1}}\\
		\end{smallmatrix}
	\right)$;\\
	hence ${\det}_{\C}^2\circ U(\exp t \pi J_0 S^-)=e^{2\pi i(n-1)t}$
	and $\operatorname{deg}({{\det}_\C}^2\circ U\circ \psi_{2-})=n-1$.
\end{proof}
		  
Formula \eqref{defdetunit} is the definition of the Conley-Zehnder index used in \cite{DeGosson, HWZ}.
Another formula is obtained using the parametrization of the symplectic group introduced in \cite{refs:RobRaw}:
\begin{cor}
The Conley-Zehnder index of a path $\psi\in \SP(n)$ is given by
\begin{equation}\label{newdef}
		\mu_{\textrm{CZ}}(\psi) := \operatorname{deg}(\hat\rho^2\circ \widetilde{\psi})
\end{equation}
	where $\hat\rho:  \Sp(\R^{2n},\Omega_0)\rightarrow S^1$ is the normalized complex determinant
	of the $\C$-linear part of the matrix:
	\begin{equation}
\hat\rho(A)=\frac{{\det}_{\C} \left(\half(A-J_0AJ_0)\right)}{\left\vert{\det}_{\C} \left(\half(A-J_0AJ_0)\right)\right\vert}.	
	\end{equation}
\end{cor}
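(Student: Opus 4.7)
The plan is to invoke Proposition \ref{compdef}: it suffices to check that $\hat\rho$ is a well-defined continuous map $\Sp(\R^{2n},\Omega_0)\to S^1$ satisfying (a) $\hat\rho|_{\textrm{U}(n)}={\det}_\C$, (b) $\hat\rho(W^-)\in\{\pm1\}$, and (c) $\deg(\hat\rho^2\circ\psi_{2-})=n-1$. The only nontrivial well-definedness issue is that the denominator must be nonzero; here I would invoke the Robinson--Rawnsley result cited in the text which guarantees that the $\C$-linear part $A_+:=\tfrac12(A-J_0AJ_0)$ of any symplectic matrix $A$ is invertible as a complex endomorphism of $(\R^{2n},J_0)$. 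Continuity is then immediate.

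For (a), on $\textrm{U}(n)=\Or(2n)\cap\Sp(\R^{2n},\Omega_0)$ every matrix $A$ commutes with $J_0$, hence $-J_0AJ_0=A$ and $A_+=A$. Moreover, $|{\det}_\C A|=1$ for $A\in \textrm{U}(n)$, so $\hat\rho(A)={\det}_\C A$. For (b), a direct block computation using $A_+=\tfrac12\bigl((P+S)+i(R-Q)\bigr)$ applied to the diagonal matrix $W^-=\operatorname{diag}(2,-1,\dots,-1,\tfrac12,-1,\dots,-1)$ gives $W^-_+=\operatorname{diag}(\tfrac54,-1,\dots,-1)\in\textrm{GL}(n,\R)$, so ${\det}_\C W^-_+=\tfrac54\cdot(-1)^{n-1}$ is a nonzero real number, and therefore $\hat\rho(W^-)=\pm 1$.

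For (c), I decompose $S^-$ as a symplectic direct sum of the $2\times 2$ block $S^-_\star=\left(\begin{smallmatrix}0&-\log 2/\pi\\ -\log 2/\pi&0\end{smallmatrix}\right)$ (acting on coordinates $(1,n+1)$) and the identity on the remaining $2(n-1)$ coordinates. Then $\psi_{2-}(t)=\phi_t\diamond \exp(t\pi J_0\Id_{2n-2})$ where a short computation gives $\phi_t=\operatorname{diag}(2^t,2^{-t})$. The $\C$-linear part of $\phi_t$ is $\tfrac12(2^t+2^{-t})\Id_2=\cosh(t\log 2)\Id_2$, whose complex determinant is the positive real number $\cosh(t\log 2)$; the remaining factor $\exp(t\pi J_0\Id_{2n-2})$ lies in $\textrm{U}(n-1)$ and has complex determinant $e^{i\pi t(n-1)}$. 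Multiplicativity of $\det_\C$ under $\diamond$ then yields
\begin{equation*}
\hat\rho\bigl(\psi_{2-}(t)\bigr)=\frac{\cosh(t\log 2)\,e^{i\pi t(n-1)}}{\cosh(t\log 2)}=e^{i\pi t(n-1)},
\end{equation*}
so $\hat\rho^2\circ\psi_{2-}(t)=e^{2i\pi t(n-1)}$, a loop in $S^1$ of degree $n-1$.

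With the three conditions of Proposition \ref{compdef} verified, the formula $\mu_{\textrm{CZ}}(\psi)=\deg(\hat\rho^2\circ\widetilde\psi)$ follows immediately. The main obstacle is the invertibility of $A_+$ for all symplectic $A$, which is not obvious from the symplectic block relations; I rely here on the reference \cite{refs:RobRaw} cited in the surrounding discussion, where this is established as part of a parametrization of $\Sp(\R^{2n},\Omega_0)$.
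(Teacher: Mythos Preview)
Your proof is correct and follows essentially the same route as the paper: verify the three hypotheses of Proposition~\ref{compdef} for $\hat\rho$, using the symplectic splitting of $S^-$ to reduce the degree computation on $\psi_{2-}$ to the $2\times 2$ block $\operatorname{diag}(2^t,2^{-t})$ plus a unitary factor. The one substantive difference is the invertibility of $A_+=\tfrac12(A-J_0AJ_0)$: you defer to the reference, whereas the paper disposes of it in one line by observing that for nonzero $v$,
\[
4\,\Omega_0(A_+v,\,J_0A_+v)=2\,\Omega_0(v,J_0v)+\Omega_0(Av,J_0Av)+\Omega_0(AJ_0v,J_0AJ_0v)>0,
\]
since $\Omega_0(\,\cdot\,,J_0\,\cdot\,)$ is positive definite. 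This makes the argument self-contained and removes what you flagged as the main obstacle. Your explicit check of condition~(b) for $W^-$ is a nice addition; the paper leaves it implicit (it falls out of the formula $\hat\rho(\psi_{2-}(t))=e^{i\pi(n-1)t}$ at $t=1$).
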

\begin{proof}
	Remark that for any $A \in \Sp(\R^{2n},\Omega_0)$ the element $C_A:=\half(A-J_0AJ_0)$, which clearly defines
	a complex linear endomorphism of $\C^{n}$ since it commutes with $J_0$, is always invertible.
	Indeed for any non-zero $v \in V$ 
	\[
		 4\Omega_0 (C_Av, J_0C_Av) \, = \, 2\Omega_0 (v,J_0v) + \Omega_0 (Av, J_0Av) +
		\Omega_0 (AJ_0v, J_0AJ_0v) > 0.
	\]
	If $A\in \textrm{U}\left(n\right)$, then $C_A=A$ so that $\hat\rho(A)={\det}_{\C}(A)$ hence
	$\hat\rho$ is a continuous map which coincide with $\det_{\C}$ on $\textrm{U}\left(n\right)$.
	Furthermore\\
	$\half
	\left(\left(
		\begin{smallmatrix}
			2^t&  0\\
			0& 2^{-t} 
		\end{smallmatrix}
	\right)
	- J_0
	\left(
		\begin{smallmatrix}
			2^t&  0\\
			0& 2^{-t} 
		\end{smallmatrix}
	\right)
	J_0\right)=\half
	\left(
		\begin{smallmatrix}
			2^t+2^{-t}&  0\\
			0& 2^t + 2^{-t} 
		\end{smallmatrix}
	\right)$
	hence its complex determinant is equal to $\half(2^t+2^{-t})$ and its normalized complex determinant is equal to $1$
	so that $\hat\rho(\exp t \pi J_0 S^-)=e^{\pi i (n-1)t}$ and $\operatorname{deg}(\hat\rho^2\circ \psi_{2-})=n-1.$
\end{proof}

\section{The Robbin-Salamon index for a path of Lagrangians}\label{RSLagr}

A \emph{Lagrangian} in  a symplectic vector space $(V,\Omega)$  of dimension $2n$ is a subspace $L$ of $V$ of dimension $n$ such that $\left. \Omega \right\vert_{L \times L} = 0 .$
Given any Lagrangian $L$ in $V ,$ there exists a Lagrangian $M$ (not unique!) such that $L \oplus M = V .$
With the choice of such a supplementary $M$ any Lagrangian $L'$ in a neighborhood of $L$ (any Lagrangian supplementary to $M$) can be identified to a linear map $\alpha : L \rightarrow M$ through
$L' = \{ v + \alpha (v) \,\vert \, v \in L \} ,$ with $\alpha$ such that $\Omega\bigl(\alpha(v),w\bigr) + \Omega\bigl(v,\alpha(w)\bigr) = 0 \ \forall v,w \in L$.
Hence it can be identified to a symmetric bilinear form $\underline{\alpha} : L \times L \rightarrow \R : (v,v') \mapsto \Omega\bigl(v,\alpha(v')\bigr) .$
In particular the tangent space at a point $L$ to the space $\mathcal{L}_{(V,\Omega)}$ of Lagrangians in $(V,\Omega)$ can be identified to the space of symmetric bilinear forms on $L .$

If $\Lambda : [a,b] \rightarrow \mathcal{L}_n:=\mathcal{L}_{(\R^{2n},\Omega_0)} : t \mapsto \Lambda_t$ is a smooth curve of Lagrangian subspaces in $(\R^{2n},\Omega_0)$ , we define $Q(\Lambda_{t_0}, \dot{\Lambda}_{t_0})$
to be the symmetric bilinear form on $\Lambda_{t_0}$ defined by
\begin{equation}
	Q(\Lambda_{t_0}, \dot{\Lambda}_{t_0})(v,v') = \left.\frac{d}{dt} \underline{\alpha}_t (v,v')\right\vert_{t_0} = \left.\frac{d}{dt} \Omega \bigl(v,\alpha_t(v')\bigr) \right\vert_{t_0}
\end{equation}
where $\alpha_t : \Lambda_{t_0} \rightarrow M$ is the map corresponding to $\Lambda_t$ for a decomposition $\R^{2n} = \Lambda_{t_0} \oplus M$ with $M$ Lagrangian.
Then \cite{RobbinSalamon} :
\begin{itemize}
\item	the symmetric bilinear form $Q(\Lambda_{t_0}, \dot{\Lambda}_{t_0}) : \Lambda_{t_0} \times \Lambda_{t_0} \rightarrow \R$ is independent of the choice of the supplementary Lagrangian $M$ to $\Lambda_{t_0} ;$
\item
if $\psi \in \Sp(\R^{2n},\Omega_0)$ then
	\begin{equation}\label{lem:naturalitedeQ}
		Q(\psi \Lambda_{t_{0}}, \psi \dot{\Lambda}_{t_{0}})(\psi v, \psi v') = Q(\Lambda_{t_{0}}, \dot{\Lambda}_{t_{0}})(v,v') \quad \forall v,v' \in \Lambda_{t_{0}} .
	\end{equation}
\end{itemize}

Let us choose and fix a Lagrangian $L$ in $(\R^{2n},\Omega_0)$. Consider a smooth path of Lagrangians $\Lambda : [a,b] \rightarrow \mathcal{L}_n$.
A \emph{crossing} for $\Lambda$ is a number $t \in [a,b]$ for which $\dim \Lambda_t \cap L \neq 0 .$
At each crossing time $t\in [a,b]$ one defines the \emph{crossing form}
\begin{equation}
	\Gamma (\Lambda , L , t) = \left. Q \bigl( \Lambda_t , \dot{\Lambda}_t \bigr) \right\vert_{\Lambda_t \cap L} .
\end{equation}
A crossing $t$ is called \emph{regular} if the crossing form $\Gamma (\Lambda , L , t)$ is nondegenerate.
In that case $\Lambda_s \cap L = \{0\}$ for $s\neq t$ in a neighborhood of $t.$
\begin{definition}[\cite{RobbinSalamon}]
	For a curve $\Lambda : [a,b] \rightarrow \mathcal{L}_n$ with only regular crossings the \emph{Robbin-Salamon index} is defined as
	\begin{equation}\label{Maslov}
		\mu_{RS}(\Lambda , L) = \half \sign \Gamma(\Lambda, L, a) + \sum_{{\stackrel{a<t<b}{\mbox{\tiny $t$ crossing}}}} \sign \Gamma(\Lambda, L, t) + \half \sign \Gamma(\Lambda, L, b) .
	\end{equation}
\end{definition}
Robbin and Salamon show (Lemmas $2.1$ and $2.2$  in \cite{RobbinSalamon}) that two paths with only regular crossings which are homotopic with fixed endpoints have the same Robbin-Salamon index
and that every continuous path of Lagrangians is homotopic with fixed endpoints to one having only regular crossings.
These two properties allow to define the Robbin-Salamon index for every continuous path of Lagrangians and this index is clearly invariant under homotopies with fixed endpoints.
It depends on the choice of the reference Lagrangian $L$.
Robbin and Salamon show (\cite{RobbinSalamon}, Theorem $2.3$):
\begin{theorem}[\cite{RobbinSalamon}]\label{proplag}
	The index $\mu_{RS}$ has the following properties:
	\begin{enumerate}
		\item(Naturality) For $\psi \in \Sp(\R^{2n},\Omega) ~~$
			$\mu_{RS}(\psi \Lambda,\psi L ) = \mu_{RS}(\Lambda, L )$.
		\item(Catenation) For $a < c < b,~
			\mu_{RS}(\Lambda, L) = \mu_{RS}(\Lambda_{\vert_{[a,c]}}, L ) + \mu_{RS}(\Lambda_{\vert_{[c,b]}}, L )$.
		\item(Product) If $n'+n''= n$, identify $\mathcal{L}_{n'}\times \mathcal{L}_{n''}$ as a submanifold of $\mathcal{L}_{n}$ in the obvious way.
			Then $\mu_{RS}(\Lambda'\oplus\Lambda'' , L'\oplus L'') = \mu_{RS}(\Lambda', L') + \mu_{RS}(\Lambda'', L'').$
		\item(Localization) If $L = R^n \times \{0\}$ and $\Lambda(t) = \Gr(A(t))$ where $A(t)$ is a path of symmetric matrices, then the  index
			of  $\Lambda$ is given by\\ $\mu_{RS}(\Lambda, L ) = \half \sign A(b) - \half \sign A(a)$.
		\item(Homotopy) Two paths $\Lambda_0, \Lambda_1 : [a, b] \rightarrow \mathcal{L}_{n}$ with $\Lambda_0(a) =\Lambda_1(a)$ and $\Lambda_0(b) = \Lambda_1(b)$ are homotopic with fixed endpoints if and only if they
			have the same index.
		\item(Zero) Every path $\Lambda : [a, b] \rightarrow \Sigma_k(V )$, with $\Sigma_k(V )=\{\,M\in \mathcal{L}_{n}\,\vert\, \dim M\cap L=k\,\}$, has  index $\mu_{RS}(\Lambda, L ) = 0$.
	\end{enumerate}
\end{theorem}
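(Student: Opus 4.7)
The plan is to establish each property in turn by working with paths with only regular crossings, the general case following by the homotopy-invariance established before the theorem. \emph{Naturality} is immediate from \eqref{lem:naturalitedeQ}: a symplectic $\psi$ carries crossings of $(\Lambda,L)$ bijectively to crossings of $(\psi\Lambda,\psi L)$ preserving dimensions, and identifies the crossing forms, so all signatures in \eqref{Maslov} match term-by-term. \emph{Catenation} is a direct reorganization of \eqref{Maslov}: if $c$ is not a crossing the sum splits cleanly, while if $c$ is a crossing the two boundary half-contributions from the subintervals sum to the single interior contribution in the unsplit formula. \emph{Product} follows since $(\Lambda'_t\oplus\Lambda''_t)\cap(L'\oplus L'')=(\Lambda'_t\cap L')\oplus(\Lambda''_t\cap L'')$ and the crossing form on the sum decomposes as the direct sum of the individual crossing forms, so signatures add.

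For \emph{Localization}, I would choose the Lagrangian supplement $M=\{0\}\times\R^n$ to each $\Lambda_{t_0}=\Gr(A(t_0))$ (always transverse). A direct computation represents $\Lambda_t$ near $t_0$ as the graph over $\Lambda_{t_0}$ of $(v,A(t_0)v)\mapsto(0,(A(t)-A(t_0))v)$, producing the crossing form $\Gamma(\Lambda,L,t_0)(v,v')=\langle v,A'(t_0)v'\rangle$ on $\ker A(t_0)=\Lambda_{t_0}\cap L$. Standard first-order perturbation then shows $\sign A(t_0+\varepsilon)-\sign A(t_0-\varepsilon)=2\sign\Gamma(\Lambda,L,t_0)$ at interior regular crossings, and $\sign A(a+\varepsilon)-\sign A(a)=\sign\Gamma(\Lambda,L,a)$ when $a$ is a crossing. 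Telescoping these jumps across all crossings yields $\mu_{RS}(\Lambda,L)=\tfrac12\sign A(b)-\tfrac12\sign A(a)$.

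For \emph{Zero}, I would first prove that the crossing form vanishes identically along any path in $\Sigma_k(V)$: with $\dim\Lambda_t\cap L=k$ constant, the intersection forms a smooth rank-$k$ subbundle, so any $v\in\Lambda_{t_0}\cap L$ extends to a smooth section $v(t)\in\Lambda_t\cap L$; for two such sections, $\Omega(v(t),w(t))\equiv 0$ since $L$ is Lagrangian, and differentiating at $t_0$ together with the symmetry of $Q$ yields $Q(v,w)=0$. To deduce $\mu_{RS}=0$, I would cover the path by finitely many local charts in which, after a symplectic change of basis (Naturality), $\Lambda(t)=\Gr(A(t))$ with $L=\R^n\times\{0\}$ and $\dim\ker A(t)=k$ constant; on each such chart $\sign A(t)$ is a continuous integer-valued function on a connected interval, hence constant, and Localization gives a zero contribution; Catenation then assembles the global vanishing.

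Finally for \emph{Homotopy}, the forward direction was already noted before the theorem. For the converse, I would invoke the classical fact that $\pi_1(\mathcal{L}_n)\cong\Z$ is detected by the Maslov index of loops, and check via Localization on explicit generators that this Maslov index agrees with $\mu_{RS}$ on loops; two paths with the same endpoints and the same RS index then differ by a null-homotopic loop, hence are homotopic with fixed endpoints. I expect \emph{Localization} to be the main obstacle, since it is the only property requiring genuine perturbation-theoretic analysis of how eigenvalues of $A(t)$ cross zero, and it in turn bootstraps the proofs of both Zero and Homotopy.
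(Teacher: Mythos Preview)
The paper does not give its own proof of this theorem: it is quoted as Theorem~2.3 of \cite{RobbinSalamon} and left unproved, serving only as background for the subsequent sections on paths of symplectic matrices. There is therefore nothing in this paper to compare your attempt against.

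For what it is worth, your sketch follows essentially the strategy of the original Robbin--Salamon article and is correct in outline. Two remarks. First, in your treatment of the Zero axiom, the opening observation that the crossing form vanishes identically along a path in $\Sigma_k(V)$ is true but does not by itself yield the conclusion, since the defining formula \eqref{Maslov} applies only to paths with \emph{regular} (nondegenerate) crossings; you correctly recognize this and pass to the local-chart argument via Naturality and Localization, which is what actually does the work. That first paragraph could simply be dropped. Second, for the converse direction of Homotopy you appeal to explicit generators of $\pi_1(\mathcal{L}_n)$ ``via Localization'', but a generating loop cannot be written globally as $\Gr(A(t))$ with $A$ symmetric (that chart is contractible); you will need to compute the index of a generator directly from \eqref{Maslov}, e.g.\ for the loop $t\mapsto e^{i\pi t}\,\R$ in $\mathcal{L}_1$, rather than through Localization.
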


\section{The Robbin-Salamon index for a path of symplectic matrices}\label{section:RSsympl}
\subsection{Generalized Conley-Zehnder index}
Consider the symplectic vector space $(\R^{2n}\times\R^{2n} , \overline{\Omega} = -\Omega_0 \times \Omega_0).$
Given any linear map $\psi: \R^{2n} \rightarrow \R^{2n} ,$ its graph
\begin{equation*}
	\Gr\psi = \{ (x , \psi x) \vert x \in \R^{2n} \}
\end{equation*}
is a $2n$-dimensional subspace of $\R^{2n}\times\R^{2n}$ which is Lagrangian
if and only if $\psi$ is symplectic $\bigl(\psi \in \Sp(\R^{2n},\Omega_0)\bigr) .$

A particular Lagrangian is given by the diagonal
\begin{equation}
	\Delta = \Gr\Id = \{ (x,x) \vert x \in \R^{2n} \} .
\end{equation}
Remark that $\Gr (-\psi)$ is a Lagrangian subspace which is always supplementary to $\Gr\psi$ for $\psi \in \Sp(\R^{2n},\Omega_0) .$
In fact $\Gr \phi$ and $\Gr\psi$ are supplementary if and only if $\phi-\psi$ is invertible.
\begin{definition}[\cite{RobbinSalamon}]\label{RS}
	The \emph{Robbin-Salamon index} of a continuous path of symplectic matrices $\psi :  [0,1] \rightarrow \Sp(\R^{2n},\Omega_0) : t\mapsto \psi_t$ is defined as the Robbin-Salamon index of the path of Lagrangians
	in $(\R^{2n}\times\R^{2n} , \overline{\Omega})$, 
	\begin{equation*}
		\Lambda= \Gr\psi : [0,1] \rightarrow \mathcal{L}_{2n} : t \mapsto \Gr\psi_t
	\end{equation*}
	when the fixed Lagrangian is the diagonal $\Delta$:
	\begin{equation}\label{hungry}
		\mu_{RS}(\psi):=\mu_{RS}( \Gr\psi  , \Delta).
	\end{equation}
\end{definition}
Note that this index is defined for any continuous path of symplectic matrices  but can have half integer values.

A crossing for a smooth path $\Gr\psi$ is a number $t\in [0,1]$ for which $1$ is an eigenvalue of $\psi_t$ and 
\begin{equation*}
	\Gr\psi_t\cap \Delta=\{\, (x,x)\,\vert\, \psi_tx=x\,\}
\end{equation*}
is in bijection with $\Ker(\psi_t-\Id).$

The properties of homotopy, catenation and product of theorem \ref{proplag} imply that \cite{RobbinSalamon}
\begin{itemize}
	\item $\mu_{RS}$ is invariant under homotopies with fixed endpoints,
	\item $\mu_{RS}$ is additive under catenation of paths and
	\item $\mu_{RS}$ has the product property $\mu_{RS}(\psi'\diamond\psi'') = \mu_{RS}(\psi')+\mu_{RS}(\psi'')$ as in proposition \ref{proprietescz}.
\end{itemize}
The zero property of the Robbin-Salamon index of a path of Lagrangians becomes:
\begin{proposition}
If $\psi:[a,b]\rightarrow\Sp(\R^{2n},\Omega)$ is a path of matrices such that $\dim\Ker (\psi(t)-\Id) =k$ for all $t\in [a,b]$
then $\mu_{RS}(\psi)=0$.
\end{proposition}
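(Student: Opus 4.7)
The proof is essentially a direct translation of the Zero property of the Robbin--Salamon index for Lagrangian paths (item 6 of Theorem \ref{proplag}) via the bijection already highlighted in the text.

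The plan is as follows. First I would note that, by definition \ref{RS}, $\mu_{RS}(\psi) = \mu_{RS}(\Gr\psi, \Delta)$, so it suffices to show that the path of Lagrangians $t \mapsto \Gr\psi_t$ in $(\R^{2n}\times\R^{2n}, \overline{\Omega})$ lies entirely inside the stratum
\begin{equation*}
\Sigma_k = \{\, M \in \mathcal{L}_{2n} \,\vert\, \dim(M \cap \Delta) = k \,\}.
\end{equation*}

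Next I would invoke the bijection recorded just after Definition \ref{RS}: the map $(x,x) \mapsto x$ identifies $\Gr\psi_t \cap \Delta$ with $\Ker(\psi_t - \Id)$. Consequently the hypothesis $\dim\Ker(\psi_t - \Id) = k$ for every $t \in [a,b]$ translates at once into $\dim(\Gr\psi_t \cap \Delta) = k$ for every $t$, so $\Gr\psi_t \in \Sigma_k$ for all $t$.

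Finally I would apply the Zero property (Theorem \ref{proplag}, item 6) to the path $\Gr\psi : [a,b] \rightarrow \Sigma_k$ with reference Lagrangian $\Delta$, yielding $\mu_{RS}(\Gr\psi, \Delta) = 0$, hence $\mu_{RS}(\psi) = 0$. There is no genuine obstacle here: the only thing to check is the constant-dimension claim for $\Gr\psi_t \cap \Delta$, which is immediate from the bijection with $\Ker(\psi_t - \Id)$ and has already been observed in the text preceding the statement.
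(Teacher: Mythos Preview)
Your proposal is correct and follows exactly the paper's own argument: the paper simply notes that $\Gr\psi_t\cap\Delta = \{\,v\in\R^{2n}\mid \psi_t v=v\,\}$, so $\dim(\Gr\psi_t\cap\Delta)=k$ if and only if $\dim\Ker(\psi(t)-\Id)=k$, and then invokes the Zero property of Theorem~\ref{proplag}.
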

Indeed, $\Gr\psi_t\cap\Delta = \{ v\in\R^{2n} \vert \psi_tv=v\}$ so
$\dim (\Gr\psi_t \cap \Delta) = k$ if and only if $\dim\Ker (\psi(t)-\Id) =k$.
\begin{proposition}[Naturality]\label{prop:invariance}
	Consider two continuous paths of symplectic matrices $\psi , \phi :  [0,1] \rightarrow \Sp(\R^{2n},\Omega_0)$ and define $\psi' = \phi\psi\phi^{-1}$.
	Then
	\begin{equation*}
		\mu_{RS}(\psi')=\mu_{RS}(\psi)
	\end{equation*}
\end{proposition}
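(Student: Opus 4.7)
The plan is to reduce the assertion for a varying conjugator $\phi(t)$ to two facts already available: the naturality of the Lagrangian Robbin--Salamon index (Theorem~\ref{proplag}, part~1) applied to a single symplectic transformation, and the Zero property of $\mu_{RS}$ for paths along which $\dim\Ker(\chi(t)-\Id)$ is constant.

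First I would dispose of the case of a constant conjugator $\phi_0\in\Sp(\R^{2n},\Omega_0)$. The map $(\phi_0,\phi_0)$ belongs to $\Sp(\R^{2n}\times\R^{2n},\overline{\Omega})$, it sends $\Gr\chi$ to $\Gr(\phi_0\chi\phi_0^{-1})$ for any $\chi\in\Sp(\R^{2n},\Omega_0)$, and it preserves the diagonal $\Delta$. So Theorem~\ref{proplag}.1 applied to $(\phi_0,\phi_0)$, together with definition~\eqref{hungry}, gives directly
$$
\mu_{RS}\bigl(\phi_0\psi(\cdot)\phi_0^{-1}\bigr) \;=\; \mu_{RS}\bigl((\phi_0,\phi_0)\Gr\psi(\cdot),(\phi_0,\phi_0)\Delta\bigr) \;=\; \mu_{RS}(\Gr\psi(\cdot),\Delta) \;=\; \mu_{RS}(\psi).
$$

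To handle a varying $\phi$ I would introduce the $2$-parameter family
$$
H:[0,1]\times[0,1]\longrightarrow \Sp(\R^{2n},\Omega_0),\qquad H(s,t)=\phi(st)\,\psi(t)\,\phi(st)^{-1}.
$$
Then $H(1,\cdot)=\psi'$, $H(0,\cdot)=\alpha$ is the constant-conjugator path $\alpha(t)=\phi(0)\psi(t)\phi(0)^{-1}$, the bottom edge $H(s,0)=\phi(0)\psi(0)\phi(0)^{-1}$ is constant in $s$, and the top edge is $\gamma(s):=H(s,1)=\phi(s)\psi(1)\phi(s)^{-1}$. A standard square--boundary argument (the boundary of the filled square is null-homotopic, and one of its four edges is already constant) shows that $\psi'$ is homotopic with fixed endpoints to the catenation $\alpha\cdot\gamma$. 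Hence, by the homotopy invariance and catenation property of $\mu_{RS}$ recorded in Section~\ref{section:RSsympl},
$$
\mu_{RS}(\psi') \;=\; \mu_{RS}(\alpha) + \mu_{RS}(\gamma).
$$

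It remains to identify the two terms on the right. The first equals $\mu_{RS}(\psi)$ by the constant-conjugator case established above. For the second, observe that each $\gamma(s)=\phi(s)\psi(1)\phi(s)^{-1}$ is conjugate to $\psi(1)$, so its spectrum (with multiplicities) is independent of $s$; in particular $\dim\Ker(\gamma(s)-\Id)$ is constant, and the Zero property yields $\mu_{RS}(\gamma)=0$. Combining gives $\mu_{RS}(\psi')=\mu_{RS}(\psi)$, as required. The only step that is not essentially bookkeeping is the homotopy argument producing $\psi'\simeq\alpha\cdot\gamma$, but this is routine because one side of the square is constant; all the other pieces are quotations of properties already established for $\mu_{RS}$ in the preceding sections.
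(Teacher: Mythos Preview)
Your argument is correct and takes a different route from the paper. The paper works at the level of crossing forms: it observes that $\Gr\psi'_t=(\phi_t\times\phi_t)\Gr\psi_t$ and $(\phi_t\times\phi_t)\Delta=\Delta$, then, after a homotopy making $\phi$ locally constant near each crossing, invokes the pointwise naturality \eqref{lem:naturalitedeQ} of the form $Q$ to conclude that the crossing forms of $\psi'$ and $\psi$ have the same signatures. You stay above the crossing-form layer entirely: you reduce to a constant conjugator via the square $H(s,t)=\phi(st)\psi(t)\phi(st)^{-1}$, handle that case by the Lagrangian naturality of Theorem~\ref{proplag} applied to $(\phi_0,\phi_0)$, and then kill the leftover edge $\gamma(s)=\phi(s)\psi(1)\phi(s)^{-1}$ with the Zero property, using only homotopy invariance, catenation, and Zero. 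This is cleaner and is in fact the argument behind the remark made later in the proof of Lemma~\ref{charRS2} that ``the invariance by homotopies with fixed end points, the additivity under catenation and the zero property imply the naturality''. The paper's approach, by contrast, makes the geometric reason---equality of the crossing signatures---explicit, at the cost of the perturbation step on~$\phi$.
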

\begin{proof}
	One has
	\begin{eqnarray*}
		\Lambda'_t := \Gr \psi'_t &=& \{ (x,\phi_t\psi_t\phi_t^{-1}x) \,\vert \,x\in \R^{2n}\}\\
		&=& \{ (\phi_ty,\phi_t\psi_ty) \,\vert\, y\in \R^{2n}\}\\
		&=& (\phi_t\times\phi_t) \Gr \psi_t \\
		&=& (\phi_t\times\phi_t)\Lambda_t
	\end{eqnarray*}
	and $(\phi_t\times\phi_t)\Delta = \Delta$.
	Furthermore $(\phi_t\times\phi_t) \in \Sp(\R^{2n}\times\R^{2n} , \overline{\Omega}).$\\	
	Hence $t \in [0,1]$ is a crossing for the path of Lagrangians $\Lambda' = \Gr\psi'$ if and only if
	$\dim \Gr\psi'_t \cap \Delta \neq 0$ if and only if $\dim (\phi_t \times \phi_t)(\Gr\psi_t \cap \Delta) \neq 0$
	if and only if $t$ is a crossing for the path of Lagrangian $\Lambda = \Gr\psi$.
	
	By homotopy with fixed endpoints, we can assume that $\Lambda$ has only regular crossings and
	$\phi$ is locally constant around each crossing $t$ so that
	\begin{equation*}
		\frac{d}{dt}({\phi\psi\phi^{-1}})(t) = \phi_t\dot{\psi}_t\phi^{-1}_t .
	\end{equation*}
	Then at each crossing
	\begin{eqnarray*}
		\Gamma(\Gr\psi',\Delta,t) &=& Q(\Lambda'_t , \dot{\Lambda'}_t)\vert_{\Gr\psi'_t\cap\Delta}\\
		&=& Q((\phi_t \times \phi_t)\Lambda_t , (\phi_t \times \phi_t)\dot{\Lambda}_t)\vert_{(\phi_t \times \phi_t)\Gr\psi_t\cap\Delta}\\
		&=& Q(\Lambda_t , \dot{\Lambda}_t)\vert_{\Gr\psi_t\cap\Delta} \circ (\phi^{-1}_t \times \phi^{-1}_t)\otimes(\phi^{-1}_t \times \phi^{-1}_t)
	\end{eqnarray*}
	in view of  \eqref{lem:naturalitedeQ}, so that
	\begin{equation*}
		\sign \Gamma(\Gr\psi',\Delta,t) = \sign \Gamma(\Gr\psi,\Delta,t).
	\end{equation*}
\end{proof}
\begin{definition}
	For any smooth path $\psi$ of symplectic matrices, define a path of symmetric matrices $S$ through
	\begin{equation*}
		\dot{\psi}_t = J_0S_t\psi_t.
	\end{equation*}
	This is  indeed  possible since $\psi_t\in\Sp(\R^{2n},\Omega_0)\, \forall t$, thus $\psi_t ^{-1}\dot{\psi}_t$ is in the Lie algebra $sp(\R^{2n},\Omega_0)$
	and every element of this Lie algebra may be written in the form $J_0S$ with $S$ symmetric.
\end{definition}
The symmetric bilinear form $Q\bigl(\Gr\psi,\frac{d}{dt}{\Gr\psi}\bigr)$ is given as follows.
For any $t_0\in[0,1],$ write $\R^{2n}\times\R^{2n}=\Gr\psi_{t_0}\oplus \Gr(-\psi_{t_0}).$
The linear map $\alpha_t : \Gr\psi_{t_0}\rightarrow \Gr(-\psi_{t_0})$ corresponding to $\Gr\psi_t$ is obtained from:
\begin{equation*}
	(x,\psi_tx)=(y,\psi_{t_0}y) + \alpha_t(y,\psi_{t_0}y)=(y,\psi_{t_0}y)+(\widetilde{\alpha}_t y,-\psi_{t_0}\widetilde{\alpha}_t y)
\end{equation*}
if and only if $(\Id+\widetilde{\alpha}_t )y=x$ and $\psi_{t_0}(\Id-\widetilde{\alpha}_t )y=\psi_tx$, hence $\psi_{t_0}^{-1}\psi_t(\Id +\widetilde{\alpha}_t)=\Id - \widetilde{\alpha}_t$ and
\begin{equation*}
	\widetilde{\alpha}_t=(\Id +\psi_{t_0}^{-1}\psi_t)^{-1}(\Id-\psi_{t_0}^{-1}\psi_t)\quad;\qquad \left. \frac{d}{dt}\widetilde{\alpha}_t \right\vert_{t_0}=-\half\psi_{t_0}^{-1}\dot\psi_{t_0}.
\end{equation*}
Thus
\begin{eqnarray*}
	\lefteqn{Q \Bigl( \Gr\psi_{t_0},\frac{d}{dt}\Gr\psi_{t_0} \Bigr) \bigl( ( v,\psi_{t_0}v),(v',\psi_{t_0}v' ) \bigr) }\\
	& = & \left. \frac{d}{dt} \overline{\Omega} \bigl( (v,\psi_{t_0}v), \alpha_t(v',\psi_{t_0}v') \bigr) \right\vert_{t_0}\\
	& = & \left.\frac{d}{dt} \overline{\Omega} \bigl( (v,\psi_{t_0}v), (\widetilde{\alpha}_tv',-\psi_{t_0} \widetilde{\alpha}_tv' ) \bigr) \right\vert_{t_0}\\
	& = & -2\Omega_0 \Bigl( v,\left.\frac{d}{dt} \widetilde{\alpha_t} \right\vert_{t_0}v' \Bigr)\\
	& = & \Omega_0(v,\psi_{t_0}^{-1}\dot\psi_{t_0}v')\\
	& = & \Omega_0(\psi_{t_0}v,J_0S_{t_0}\psi_{t_0}v').
\end{eqnarray*}
Hence the restriction of $Q$ to $\Ker(\psi_{t_0}-\Id)$ is given by
\begin{equation*}
	Q \Bigl( \Gr\psi_{t_0},\frac{d}{dt}\Gr\psi_{t_0} \Bigr) \bigl( ( v,\psi_{t_0}v),(v',\psi_{t_0}v' ) \bigr) = v^\tau S_{t_0}v' \quad \forall v,v' \in \Ker(\psi_{t_0}-\Id).
\end{equation*}
A crossing $t_0\in [0,1]$ is thus regular for the smooth path $\Gr\psi$ if and only if the restriction of $S_{t_0}$ to $\Ker(\psi_{t_0}-\Id)$ is nondegenerate.
\begin{definition}[\cite{RobbinSalamon}]\label{cross}
	Let $\psi :  [0,1] \rightarrow \Sp(\R^{2n},\Omega_0) : t\mapsto \psi_t$ be a smooth path of symplectic matrices. Write $\dot\psi_t=J_0S_t \psi_t$ with $t\mapsto S_t$ a path of symmetric matrices.
	A number $t \in \left[0,1\right]$ is called a \emph{crossing} if $\det (\psi_t-\Id) = 0.$
	For $t\in [0,1]$,  the \emph{crossing form} $\Gamma(\psi,t)$ is defined as the quadratic form which is the restriction of $S_t$ to $\Ker (\psi_t-\Id).$
A crossing $t_0$ is called \emph{regular} if the crossing form $\Gamma(\psi,t_0)$ is nondegenerate.		
\end{definition}
\begin{proposition}[\cite{RobbinSalamon}]\label{murs}
	For a smooth path $\psi:  [0,1] \rightarrow \Sp(\R^{2n},\Omega_0) : t\mapsto \psi_t$ having only regular crossings, the Robbin-Salamon index introduced in definition \ref{RS} is given by
	\begin{equation}\label{muCZRS}
		\mu_{RS}(\psi) = \half \sign \Gamma(\psi,0) + \sum_{{\stackrel{t \textrm{ crossing,}}{\mbox{\tiny{$t\in ]0,1[$}}}}} \sign \Gamma(\psi,t)+\half \sign \Gamma(\psi,1).
	\end{equation}
\end{proposition}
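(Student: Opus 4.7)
The plan is to unpack Definition \ref{RS} using the explicit formula \eqref{Maslov} for the Robbin--Salamon index of a path of Lagrangians, and then translate both the crossing set and the crossing form from the Lagrangian picture on $(\R^{2n}\oplus\R^{2n},\overline{\Omega})$ back to the matrix picture of Definition \ref{cross}. All the genuine computation is already done in the paragraph immediately preceding the statement; what remains is bookkeeping.

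\textbf{Step 1: match the crossings.} By Definition \ref{RS}, $\mu_{RS}(\psi)=\mu_{RS}(\Gr\psi,\Delta)$, and a crossing of $\Gr\psi$ with respect to $\Delta$ is a $t$ with $\Gr\psi_t\cap\Delta\neq\{0\}$. The equation $(x,\psi_tx)=(y,y)$ forces $x=y$ and $\psi_tx=x$, so the linear map $(x,\psi_tx)\mapsto x$ identifies $\Gr\psi_t\cap\Delta$ with $\Ker(\psi_t-\Id)$. Hence the crossings of $\Gr\psi$ are exactly the $t\in[0,1]$ with $\det(\psi_t-\Id)=0$, which coincides with the notion of crossing in Definition \ref{cross}.

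\textbf{Step 2: match the crossing forms.} The computation preceding the statement gives
\[
  Q\Bigl(\Gr\psi_{t_0},\tfrac{d}{dt}\Gr\psi_{t_0}\Bigr)\bigl((v,\psi_{t_0}v),(v',\psi_{t_0}v')\bigr)
  =\Omega_0\bigl(\psi_{t_0}v,J_0S_{t_0}\psi_{t_0}v'\bigr).
\]
Restricting to $\Gr\psi_{t_0}\cap\Delta$ amounts to taking $v,v'\in\Ker(\psi_{t_0}-\Id)$, so $\psi_{t_0}v=v$ and $\psi_{t_0}v'=v'$, and the right-hand side becomes $v\tr S_{t_0}v'$. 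Via the isomorphism of Step 1, this is precisely the restriction of $S_{t_0}$ to $\Ker(\psi_{t_0}-\Id)$, i.e. the crossing form $\Gamma(\psi,t_0)$ of Definition \ref{cross}. In particular, regularity in the Lagrangian sense ($\Gamma(\Gr\psi,\Delta,t_0)$ nondegenerate) is equivalent to regularity in the matrix sense ($S_{t_0}|_{\Ker(\psi_{t_0}-\Id)}$ nondegenerate), so the hypothesis that $\psi$ has only regular crossings is exactly what is needed to apply \eqref{Maslov}.

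\textbf{Step 3: conclude.} Under the identifications of Steps 1 and 2, the signatures $\sign\Gamma(\Gr\psi,\Delta,t)$ and $\sign\Gamma(\psi,t)$ agree at every crossing. Substituting into \eqref{Maslov} with $\Lambda=\Gr\psi$, $L=\Delta$, $[a,b]=[0,1]$ yields the formula \eqref{muCZRS}. The main (and only) step requiring thought is the identification of the crossing forms in Step 2, but since the explicit computation of $Q$ on graphs has been carried out just above, the proof reduces to verifying that restriction to $\Ker(\psi_{t_0}-\Id)$ cancels the $\psi_{t_0}$ factors and leaves the quadratic form $S_{t_0}$.
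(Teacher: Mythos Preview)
Your proof is correct and is exactly the argument the paper intends: the paper does not give a separate proof of this proposition (it is attributed to \cite{RobbinSalamon}), but the computation of $Q\bigl(\Gr\psi_{t_0},\frac{d}{dt}\Gr\psi_{t_0}\bigr)$ carried out immediately before Definition~\ref{cross} is precisely the content of your Step~2, and your Steps~1 and~3 are the straightforward bookkeeping that translates formula~\eqref{Maslov} into~\eqref{muCZRS}.
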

\begin{proposition}[\cite{RobbinSalamon}]
	Let $\psi : [0,1] \rightarrow \Sp(\R^{2n},\Omega_0)$ be a continuous path of symplectic matrices such
	that $\psi(0)=\Id$ and such that $1$ is not an eigenvalue of $\psi(1)$ (i.e. $\psi \in \SP(n)$).
	The Robbin-Salamon index of $\psi$ defined by \eqref{hungry} coincides with the Conley-Zehnder index of $\psi$ 
	In particular, for a smooth path  $\psi \in \SP(n)$ having only regular crossings, the Conley-Zehnder index is given by
	\begin{eqnarray}
		\mu_{CZ}(\psi)&=&  \half \sign \Gamma(\psi,0)+ \sum_{{\stackrel{t \textrm{ crossing,}}{\mbox{\tiny{$t\in ]0,1[$}}}}} \sign \Gamma(\psi,t) \nonumber\\
		&=&\half\sign(S_0)+ \sum_{{\stackrel{t \textrm{ crossing,}}{\mbox{\tiny{$t\in ]0,1[$}}}}} \sign \Gamma(\psi,t) 
	\end{eqnarray}
	with  $S_0=-J_0\dot\psi_0.$
\end{proposition}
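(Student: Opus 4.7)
The plan is to invoke the axiomatic characterization of the Conley-Zehnder index given by Proposition \ref{caract}: I will show that the restriction of $\mu_{RS}$ to $\SP(n)$ satisfies the homotopy, loop, and signature properties. Homotopy invariance with fixed endpoints has already been established for $\mu_{RS}$, so only the signature and loop axioms require work.

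For the signature property, I take $\psi(t)=\exp(tJ_0 S)$ with $S$ symmetric nondegenerate and $\|S\|<2\pi$ and compute via Proposition \ref{murs}. Since $\dot\psi_t = J_0 S \psi_t$, the symmetric path associated with $\psi$ is constantly equal to $S$. A crossing at $t_0>0$ would force $J_0 S$ to have a nonzero eigenvalue of the form $2\pi i k/t_0$ with $k\in\Z\setminus\{0\}$, which is impossible since $\|J_0 S\|<2\pi$ and $t_0\leq 1$; the case $k=0$ is ruled out by nondegeneracy of $S$. Hence the only crossing is at $t=0$, where $\Ker(\psi(0)-\Id)=\R^{2n}$ and the crossing form equals $S$ itself, yielding $\mu_{RS}(\psi)=\half\sign S$.

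For the loop property, the pointwise product $\phi\psi$ is homotopic with fixed endpoints to the catenation of $\phi$ and $\psi$ (as recalled in the proof of Proposition \ref{caract}), so catenation-additivity of $\mu_{RS}$ reduces the claim to proving $\mu_{RS}(\phi)=2\mu(\phi)$ for every loop $\phi$ based at $\Id$. By homotopy invariance and the product property, it suffices to treat the generator of $\pi_1(\Sp(\R^{2n},\Omega_0))\cong\Z$, namely $R_k\diamond\Id_{2(n-1)}$, where $R_k(t)$ is the rotation of the first symplectic $\R^2$ by angle $2\pi kt$. A direct calculation shows that its crossings occur precisely at $t_j=j/k$ for $j=0,\ldots,k$, each with $\Ker(R_k(t_j)-\Id)=\R^2$ and constant crossing form of signature $2$, so Proposition \ref{murs} gives
\[
\mu_{RS}(R_k\diamond\Id_{2(n-1)})=\half\cdot 2+(k-1)\cdot 2+\half\cdot 2=2k=2\mu(R_k\diamond\Id_{2(n-1)}).
\]

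These three axioms identify $\mu_{RS}|_{\SP(n)}$ with $\mu_{CZ}$ by Proposition \ref{caract}. The displayed explicit formula for smooth paths with only regular crossings is then just Proposition \ref{murs} applied to paths with the constrained endpoints: at $t=1$ the kernel $\Ker(\psi(1)-\Id)$ is trivial, so $\Gamma(\psi,1)=0$ and no boundary term appears; at $t=0$ the kernel is all of $\R^{2n}$, so the crossing form is $S_0=-J_0\dot\psi_0$. The main obstacle will be the loop axiom, which requires the reduction to the standard rotation loops together with the careful boundary bookkeeping in Proposition \ref{murs}.
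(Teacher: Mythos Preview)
Your strategy is exactly the paper's: verify the homotopy, loop, and signature axioms and invoke Proposition~\ref{caract}. Two points deserve attention. First, the homotopy axiom in Proposition~\ref{proprietescz} is constancy on connected \emph{components} of $\SP(n)$, which is stronger than invariance under homotopies with fixed endpoints; the paper bridges this explicitly by noting that along a homotopy within $\SP(n)$ the endpoint $\psi(1)$ moves only inside $\Sp^\star(\R^{2n},\Omega_0)$, where $1$ is never an eigenvalue, so no boundary crossing contribution can appear. You should say this rather than assert that ``only the signature and loop axioms require work.''

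Second, your loop computation as written is not quite right. For the loop $R_k\diamond\Id_{2(n-1)}$ every $t\in[0,1]$ is a crossing (the $\Id_{2(n-1)}$ block contributes a constant $(2n-2)$-dimensional kernel), and none of these crossings is regular since the crossing form vanishes on that block; so Proposition~\ref{murs} does not apply directly, and the sentence ``its crossings occur precisely at $t_j=j/k$'' is false. The paper avoids this by replacing the constant $\Id$ block with $\left(\begin{smallmatrix}a(t)\Id&0\\0&a(t)^{-1}\Id\end{smallmatrix}\right)$ where $a(t)\neq 1$ on $(0,1)$, which makes all crossings regular while keeping each signature equal to $2$. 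Your invocation of the product property suggests the cleaner alternative of splitting $\mu_{RS}(R_k\diamond\Id_{2(n-1)})=\mu_{RS}(R_k)+\mu_{RS}(\Id_{2(n-1)})=\mu_{RS}(R_k)+0$ and computing $\mu_{RS}(R_k)$ in $\Sp(\R^2,\Omega_0)$; that is perfectly valid, and your crossing count and signature then become correct, but you must say explicitly that this reduction is what you are doing.
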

\begin{proof}
	Since the Robbin-Salamon index for paths of Lagrangians is invariant under homotopies with fixed end points, the Robbin-Salamon index  for paths of symplectic matrices is also invariant under homotopies  with fixed endpoints.
	
	Its restriction to $\SP(n)$ is actually invariant under homotopies of paths in $\SP(n)$ since for any path in $\SP(n)$, the starting point $\psi_0=\Id$  is fixed and the endpoint $ \psi_1$ can only move
	in a connected component of $\Sp^*(\R^{2n},\Omega_0)$ where no matrix has $1$ as an eigenvalue.

	To show that this index coincides with the Conley-Zehnder index, it is enough, in view of proposition \ref{caract}, to show that it satisfies the loop and signature properties.

	Let us prove the signature property. Let $\psi_t=\exp(tJ_0S)$ with $S$ a symmetric nondegenerate matrix with all eigenvalues of absolute value $ <2\pi$, so that 
	$\Ker(\exp(tJ_0S)-\Id)=\{0\}$ for all $t\in ]0,1] .$ Hence the only crossing is at $t=0$, where $\psi_0=\Id$ and $\dot\psi_t= J_0 S \psi_t$ so that $S_t=S$ for all $t$ and 
	\begin{equation*}
		\mu_{CZ}(\psi)=\half\sign S_0=\half\sign S.
	\end{equation*}

	To prove the loop property, note that $\mu_{RS}$ is additive for catenation and invariant under homotopies with fixed endpoints.
	The path  $(\phi\psi)$ is  homotopic to the catenation of $\phi$ and $\psi$;
	it is thus enough to show that the Robbin-Salamon index of a loop is equal to $2\deg (\rho\circ \phi).$
	Since two loops $\phi$ and $\phi'$ are homotopic if and only if $\deg(\rho\circ\phi)=\deg(\rho\circ\phi'),$ it is enough to consider the loops $\phi_n$ defined by
	\begin{equation*}
		\phi_n(t):=
		\left(\begin{matrix}
			\cos 2\pi nt & -\sin 2\pi nt \\
			\sin 2\pi nt & \cos 2\pi nt\end{matrix}\right)\diamond \left(\begin{matrix}
			a(t)\Id &0\\
			0& a(t)^{-1}\Id\end{matrix}\right)
			\end{equation*}
	with $a: [0,1] \rightarrow \R^+$ a smooth curve with $a(0)=a(1)=1$ and $a(t)\neq 1$ for $t\in ]0,1[.$
	Since $\rho\bigl(\phi_n(t)\bigr)=e^{2\pi int}$, we have $\deg(\phi_n)=n$.\\
	The crossings of $\phi_n$ arise at $t=\frac{m}{n}$ with $m$ an integer between $0$ and $n.$ At such a crossing, $\Ker\bigl(\phi_n(t)\bigr)$ is $\R^2$ for $0<t<1$ and is $\R^{2n}$ for $t=0$ and $t=1.$ We have
	\begin{equation*}
		\dot\phi_n(t)=
		\left(\left(\begin{smallmatrix}
			0 & -2\pi n \\
			2\pi n&0\end{smallmatrix}\right)\diamond \left(\begin{smallmatrix}
			 \frac{\dot a(t)}{a(t)}\Id & 0\\
			0& -\frac{\dot a(t)}{a(t)}\Id
		 \end{smallmatrix}\right)\right)
		 \phi_n(t)
	\end{equation*}
	so that, extending $\diamond$ to symmetric matrices in the obvious way,
	\begin{equation*}
		S(t)=
		\left(\begin{smallmatrix}
			2\pi n &0\\
			0&2\pi n\end{smallmatrix}\right)\diamond \left(\begin{smallmatrix}
			0&- \frac{\dot a(t)}{a(t)}\Id \\
			 -\frac{\dot a(t)}{a(t)}\Id&0
		\end{smallmatrix}\right).
	\end{equation*}
	Thus  $\sign \Gamma(\phi_n,t)=2$ for all crossings $t=\frac{m}{n} ,\ 0 \le m \le n$.
	From equation \eqref{muCZRS} we get
	\begin{eqnarray*}
		\mu_{RS}(\phi_n)&=&\half\sign \Gamma(\phi_n,0)+\sum_{0<m<n} \sign \Gamma\bigl(\phi_n,\tfrac{m}{n}\bigr)+\half\sign \Gamma(\phi_n,1)\\
		&=&1+2(n-1)+1=2n=2\deg(\rho\circ \phi_n)
	\end{eqnarray*}
	and the loop property is proved.
	Thus the Robbin-Salamon index for paths in $\SP(n)$ coincides with the Conley-Zehnder index.

	The formula for the Conley-Zehnder index of a path $\psi \in \SP(n)$ having only regular crossings, follows then from \eqref{muCZRS}.
	Indeed, we have $\Ker(\psi_1-\Id)=\{0\}$, while $\Ker(\psi_0-\Id)=\R^{2n}$ and $\Gamma(\psi,0)=S_0.$
\end{proof}
\subsection{Another  index defined by Robbin and Salamon}\label{sect:otherRSindex}
\begin{definition}
A \emph{symplectic shear} is a path of symplectic matrices of the form $\psi_t =
	\left(\begin{smallmatrix}
		\Id & B(t) \\
		0 & \Id
	\end{smallmatrix} \right)$
	with $B(t)$ symmetric.
\end{definition}
\begin{proposition}\label{RSforshears}
	The Robbin-Salamon index  of a symplectic shear $\psi_t =
	\left(\begin{smallmatrix}
		\Id & B(t) \\
		0 & \Id
	\end{smallmatrix} \right),$
	with $B(t)$ symmetric, is equal to 
	$$
		\mu_{\textrm{RS}}(\psi)=\half \sign B(0) - \half \sign B(1).
	$$
\end{proposition}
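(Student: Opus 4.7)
The plan is to invoke the Localization property (item 4 of Theorem \ref{proplag}) by exhibiting $\Gr\psi_t$ as a path of Lagrangians entirely contained in the open chart of Lagrangians transversal to a well-chosen Lagrangian supplementary to the diagonal $\Delta$. For a shear this works because $\psi_t$ never has $-1$ as an eigenvalue, so the transversality condition holds throughout the path.

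First I would introduce $\Delta^- := \{(x,-x)\,\vert\,x\in\R^{2n}\}\subset (\R^{4n},\overline\Omega)$. A direct check shows $\Delta^-$ is Lagrangian and supplementary to $\Delta$. Transversality of $\Gr\psi_t$ with $\Delta^-$ amounts to $\Ker(\psi_t+\Id)=\{0\}$, which holds throughout $[0,1]$ because $\psi_t+\Id = \left(\begin{smallmatrix}2\Id & B(t)\\ 0 & 2\Id\end{smallmatrix}\right)$ is invertible. Consequently each $\Gr\psi_t$ is the graph of a unique linear map $\alpha_t:\Delta\to\Delta^-$, encoded by a symmetric bilinear form $\underline\alpha_t$ on $\Delta\cong\R^{2n}$.

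Next I would compute $\underline\alpha_t$ explicitly. Writing a generic point of $\Gr\psi_t$ as $(v+u,v-u)$ with $(v,v)\in\Delta$ and $(u,-u)\in\Delta^-$ gives $(\psi_t+\Id)u = (\Id-\psi_t)v$, which for the shear yields $u=(-\half B(t)y,\,0)$ when $v=(x,y)$. Substituting into $\underline\alpha_t(v,v') = \overline\Omega\bigl((v,v),(u',-u')\bigr) = -2\Omega_0(v,u')$ gives, for $v=(x,y)$ and $v'=(x',y')$,
\[
\underline\alpha_t(v,v') = -y\tr B(t)\, y'.
\]
In the $(x,y)$ coordinates on $\Delta$, this form is represented by the symmetric matrix $A(t):=\left(\begin{smallmatrix}0 & 0\\ 0 & -B(t)\end{smallmatrix}\right)$, so $\sign A(t)=-\sign B(t)$.

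Finally, I would pick a Darboux basis of $(\R^{4n},\overline\Omega)$ adapted to the splitting $\Delta\oplus\Delta^-$; this yields a linear symplectomorphism $\Phi:(\R^{4n},\overline\Omega)\to(\R^{4n},\Omega_0)$ sending $(\Delta,\Delta^-)$ to $(\R^{2n}\times\{0\},\{0\}\times\R^{2n})$ and sending $\Gr\psi_t$ to $\Gr A(t)$. By Naturality and Localization (items 1 and 4 of Theorem \ref{proplag}),
\[
\mu_{RS}(\psi) = \tfrac12\sign A(1) - \tfrac12\sign A(0) = \tfrac12\sign B(0) - \tfrac12\sign B(1),
\]
as claimed. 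The main subtlety is sign bookkeeping in the Darboux identification of $\Delta\oplus\Delta^-$: the pairing of $\Delta$ with $\Delta^-$ under $\overline\Omega$ carries a sign relative to the naive identification, and this is precisely what produces the minus sign in $A(t)$ so that the final formula comes out correctly.
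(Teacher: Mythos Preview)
Your argument is correct and is a genuinely different route from the paper's. The paper first conjugates by a block-orthogonal matrix to diagonalise $B(t)$, then uses the product property to reduce to the $2\times 2$ case, and finally perturbs the path $\left(\begin{smallmatrix}1&d(t)\\0&1\end{smallmatrix}\right)$ to a nearby path with only regular crossings at $t=0,1$, computing the crossing forms by hand. Your proof bypasses all of this: because a shear never has $-1$ as an eigenvalue, $\Gr\psi_t$ stays in the affine chart of Lagrangians transverse to $\Delta^-$, so Localization (item~4 of Theorem~\ref{proplag}) applies directly after transporting the pair $(\Delta,\Delta^-)$ to the standard pair $(\R^{2n}\times\{0\},\{0\}\times\R^{2n})$ via a linear symplectomorphism. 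Your computation of the generating form $\underline\alpha_t(v,v')=-y^\tau B(t)\,y'$ is correct, and since the signature of $\underline\alpha_t$ is a symplectic invariant (it is defined intrinsically via $\overline\Omega$), it agrees with the signature of the matrix $A(t)$ appearing after transport; this is the ``sign bookkeeping'' you allude to, and it goes through.

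What each approach buys: the paper's argument is more self-contained relative to the crossing-form description of $\mu_{RS}$ developed in Section~\ref{section:RSsympl}, while yours is shorter and more conceptual but leans on the Localization property of Theorem~\ref{proplag}, which is quoted from \cite{RobbinSalamon} without proof. One minor point to make explicit in a write-up: Naturality in Theorem~\ref{proplag} is stated for $\psi\in\Sp(\R^{2n},\Omega_0)$, i.e.\ for automorphisms, whereas you use it for a symplectic isomorphism $\Phi:(\R^{4n},\overline\Omega)\to(\R^{4n},\Omega_0)$; this is of course legitimate since the index is defined symplectically-invariantly, but it is worth saying.
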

\begin{proof}
 We  write $B(t)= A(t)^\tau D(t)A(t)$ with $A(t)\in O(\R^{n})$  and  $D(t)$ a diagonal matrix.
          The matrix $\phi_t=
          \left(\begin{smallmatrix}
		A(t)^\tau & 0 \\
		0 & A(t)
	\end{smallmatrix} \right)$
	is in $\Sp(\R^{2n},\Omega_0)$ and 
         \begin{equation*}
		\psi'_t:=\phi_t\psi_t\phi_t^{-1}=
		\left(\begin{smallmatrix}
			\Id & D(t) \\
			0 & \Id
		\end{smallmatrix} \right).
        \end{equation*}
	By proposition \ref{prop:invariance} $\mu_{\textrm{RS}}(\psi)=\mu_{\textrm{RS}}(\psi');$ by the product property it is enough to show that $\mu_{\textrm{RS}}(\psi)=\half \sign d(0)-\half \sign d(1)$ for the path 
	$$
		\psi: [0,1]\rightarrow \Sp(\R^2,\Omega_0) : t \mapsto \psi_t=
		\left(\begin{smallmatrix}
			1 & d(t) \\
			0 & 1
		\end{smallmatrix} \right).
	$$
	Since $\mu_{\textrm{RS}}$ is invariant under homotopies with fixed end points, we may assume $\psi_t=
	\left(\begin{smallmatrix}
		a(t)& d(t) \\
		c(t) &a(t)^{-1}\bigl(1+ d(t)c(t)\bigr)
	\end{smallmatrix} \right)$
	with $a$ and $c$  smooth functions such that 
	$a(0)=1$, $a(1)=1, ~\dot a(0)\neq 0,\, \dot a(1)\neq 0$ and $ a(t)>1$ for $0<t<1;$
	$c(0)=c(1)=0$, $c(t)d(t)\ge 0~ \forall t$ and $\dot c(t)\neq 0 $ (resp.$=0$) when $d(t)\neq 0 $ (resp.$=0$) for $t=0$ or $1$.\\
	The only crossings are $t=0$ and $t=1$  since the trace of $\psi(t)$ is $>2$ for $0<t<1.$
	Now, at those points ( $t=0$ and $t=1$)  $\dot\psi_t=
	\left(\begin{smallmatrix}
		\dot a(t)&  \dot d(t)\\
		\dot c(t) & -\dot a(t)+d(t)\dot c(t)
	\end{smallmatrix} \right)$
	so that $S_t= -J_0 \dot \psi_t\psi_t^{-1}=
	\left(\begin{smallmatrix}
		\dot c(t)& -\dot a(t) \\
		-\dot a(t) & \dot a(t)d(t)-\dot d(t)
	\end{smallmatrix} \right).$
 
	Clearly, at the crossings, we have  $ \Ker\psi_t =\R^2$ iff $d(t)=0$ and $ \Ker\psi_t $ is spanned by the first basis element iff $d(t)\neq 0,$ so that  from definition \ref{cross}
	$\Gamma(\psi,t)=\bigl( \dot c(t) \bigr)$
	when $d(t)\neq 0$ and $\Gamma(\psi,t)=
	\left(\begin{smallmatrix}
		0& -\dot a(t) \\
		-\dot a(t) &0
	\end{smallmatrix} \right)$
	when $d(t)= 0.$
	Hence both crossings are regular and $\sign \Gamma(\psi,t)=\sign \dot c(t)$ when $d(t)\neq 0$ and $\sign \Gamma(\psi,t)=0$ when $d(t)=0.$
	Since $d(t)c(t)\ge 0$ for all $t,$ we clearly have $\sign \dot c(0)=\sign d(0)$ and $\sign \dot c(1)= -\sign d(1)$.
	Proposition \ref{murs} then gives $\mu_{\textrm{RS}}(\psi)=\half \Gamma(\psi,0)+\half \sign \Gamma(\psi,1)=\half \sign d(0)-\half\sign d(1).$
 \end{proof}
 
 \begin{remark}
 Robbin and Salamon introduce another index $\mu'_{RS}$ for paths of symplectic matrices built from their index for paths of Lagrangians. Consider the  fixed Lagrangian $L=\{0\}\times\R^n$ in $(\R^{2n},\Omega_0)$, observe that $AL$ is  Lagrangian for any $A\in\Sp(\R^{2n},\Omega_0)$, and define, for  $\psi : [0,1]\rightarrow \Sp(\R^{2n},\Omega_0)$
\begin{equation}
	\mu'_{\textrm{RS}}(\psi) := \mu_{\textrm{RS}}(\psi L,L).
\end{equation}
This index has the following  properties  \cite{RobbinSalamon} :
\begin{itemize}
	\item it is invariant under homotopies with fixed endpoints and two paths with the same endpoints are homotopic with fixed endpoints if and only if they have the same $\mu'_{RS}$ index;
	\item it is additive under catenation of paths;
	\item it has the product property $\mu_{RS}(\psi'\diamond\psi'') = \mu_{RS}(\psi')+\mu_{RS}(\psi'')$;
	\item it vanishes on a path whose image lies in $$ \{ A \in \Sp(\R^{2n},\Omega_0) \, \vert \, \dim AL\cap L = k \}$$
		for a given $k \in \{0, \ldots, n \}$;
	\item $\mu'_{RS}(\psi)=\half \sign B(0) - \half \sign B(1)$ when $\psi_t =
		\left(\begin{smallmatrix}
			\Id & B(t) \\
			0 & \Id
		\end{smallmatrix} \right)$.
\end{itemize}
Robbin and Salamon \cite{RobbinSalamon}  prove   that those properties characterize this index.\\

	The two indices $\mu_{\textrm{RS}}$ and $\mu'_{\textrm{RS}}$ defined on paths of symplectic matrices DO NOT coincide in general.
Indeed, consider the path $\psi : [0,1]\rightarrow \Sp(\R^{2n},\Omega_0) : t\mapsto \psi_t=
	\left(\begin{smallmatrix}
			\Id & 0\\
			C(t) & \Id
	\end{smallmatrix} \right)$.
	Since $\psi_t L \cap L= L \quad \forall t $,	$\mu_{\textrm{RS}2}(\psi) = 0$.
	On the other hand,
	if $\phi =
	\left(\begin{smallmatrix}
		0 & \Id \\
		-\Id & 0
	\end{smallmatrix}\right)$
	and $\psi' = \phi\psi \phi^{-1}$, then $\psi'_t =
	\left(\begin{smallmatrix}
		\Id & -C(t) \\
		0 & \Id
	\end{smallmatrix}\right)$.
	Then
	\begin{equation*}
		\mu'_{\textrm{RS}}(\psi') = \half \sign C(1) - \half \sign C(0)
	\end{equation*}
	which is in general different from $\mu'_{\textrm{RS}}(\psi)$.
Whereas, by \eqref{prop:invariance}, $\mu_{\textrm{RS}}(\psi) = \mu_{\textrm{RS}}(\psi')$.

	The index $\mu'_{\textrm{RS}}$ vanishes on a path whose image lies into one of the $(n+1)$ strata defined by
	$ \{ A \in \Sp(\R^{2n},\Omega_0) \, \vert \, \dim AL\cap L = k \}$ for $0\le k\le n$,
	whereas the index $\mu_{\textrm{RS}}$ vanishes on a path whose image lies into one of the $(2n+1)$ strata defined by the set of symplectic matrices
	whose eigenspace of eigenvalue $1$ has dimension $k$ (for $0\le k\le 2n)$.
	
	However, the two indices $\mu_{\textrm{RS}}$ and $\mu'_{\textrm{RS}}$ coincide on symplectic shears.

 \end{remark}
 
 \section{Characterization of the Robbin-Salamon index}\label{ssection:char}
  In this section, we prove  theorem \ref{thm:Thechar} stated in the introduction.
Before proving this theorem, we show that the Robbin-Salamon index is characterized by the fact that it extends Conley-Zehnder index and has all the properties stated in the previous section. This is made explicit in  Lemma \ref{charRS1}. We then use the characterization of the Conley-Zehnder index given in
Proposition \ref{caract} to give in Lemma \ref{charRS2} a characterization of the Robbin-Salamon index in terms of six properties. 
We use explicitly the normal form of the restriction of a symplectic endomorphism to its generalised eigenspace of eigenvalue $1$ that we have proven in 
\cite{Gutt}  and that we summarize in the following proposition
\begin{proposition}[Normal form for $A_{\vert V_{[\lambda]}}$ for $\lambda=\pm 1.$]\label{normalforms1}
	Let $\lambda=\pm 1$ be an eigenvalue of $A\in Sp(\R^{2n},\Omega_0)$ and let $V_{[\lambda]}$ be the generalized eigenspace of eigenvalue $\lambda$. 
	There exists a symplectic basis of $V_{[\lambda]}$
	in which the matrix associated to the restriction of $A$ to $V_{[\lambda]}$ is a symplectic direct sum of 
	matrices of the form 
	$$
		\left(\begin{array}{cc}
			J(\lambda,r_j)^{-1}&C(r_j,d_j,\lambda)\\
			0& J(\lambda,r_j)^{\tau}
		\end{array}\right)
	$$
	where $C(r_j,d_j,\lambda):=J(\lambda,r_j)^{-1} \operatorname{diag}\bigl(0,\ldots,0, d_j\bigr)$  with $d_j\in \{0,1,-1\}$.
	If $d_j=0$,  then $r_j$ is odd.
	The dimension of the eigenspace of eigenvalue $1$ is given by $2\card \{j \,\vert\, d_j=0\}+\card\{j \,\vert \,d_j\neq0\}$.\\
	For any integer $k\ge 1$, the bilinear form on $\Ker\left( (A-\lambda\id)^{2k}\right)$ defined by
	\begin{eqnarray}
		\hat{Q}_k &:& \Ker\left( (A-\lambda\id)^{2k}\right)\times \Ker\left( (A-\lambda\id)^{2k}\right)\rightarrow \R\nonumber\\
		&&(v,w)\mapsto\Omega((A-\lambda\id)^{k}v,(A-\lambda\id)^{k-1}w)
	\end {eqnarray}
	is symmetric and we have
	\begin{equation}
		\sum_j d_j=\lambda\sum_{k\ge 1}\operatorname{Signature}(\hat{Q}_k)
	\end{equation}
\end{proposition}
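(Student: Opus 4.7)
The plan is to reduce to the nilpotent case by writing $N := A - \lambda\operatorname{Id}$, which is nilpotent on $V_{[\lambda]}$, and then classify the pair $(N,\Omega)$ up to symplectic conjugation. First I would observe that for $\lambda = \pm 1$, the generalized eigenspace $V_{[\lambda]}$ is a symplectic subspace of $(\R^{2n},\Omega_0)$: since $A^{\tau}\Omega A = \Omega$, the symplectic orthogonal of $V_{[\lambda]}$ is the sum of the generalized eigenspaces for eigenvalues $\mu \neq 1/\lambda = \lambda$, so $V_{[\lambda]} \cap V_{[\lambda]}^{\perp_{\Omega}} = \{0\}$. Hence it suffices to analyse the restriction $A|_{V_{[\lambda]}}$ as a symplectic endomorphism on its own. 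The symplectic relation translates into the identity $\Omega(Nv,w) + \Omega(v,Nw) + \lambda^{-1}\Omega(Nv,Nw) = 0$, which is the structural equation used throughout.

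Next I would carry out the cyclic decomposition of $V_{[\lambda]}$ under $N$. Start with a vector $v$ whose $N$-height $r$ is maximal, and consider the cyclic subspace $Z(v) = \operatorname{span}(v, Nv, \ldots, N^{r-1}v)$. One computes the Gram matrix of $\Omega$ on $Z(v) \cup Z(N^{\ast}v)$ for a suitable dual vector $v^{\ast}$ and distinguishes two cases. Either $Z(v)$ is already a symplectic subspace — this forces $r$ to be odd when $d_j = 0$ (because $\Omega(N^{i}v,N^{j}v)$ is antisymmetric in $(i,j)$ up to the shift imposed by the structural equation, so even $r$ leads to a degenerate form) — and yields a block of the advertised form with $d_j \in \{1,-1\}$ encoding the sign of $\Omega(N^{r-1}v, v)$. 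Or $Z(v)$ is isotropic, and one then pairs it with a complementary cyclic subspace $Z(w)$ to obtain a hyperbolic block, corresponding to $d_j = 0$. Iterating on the $\Omega$-orthogonal complement gives the claimed symplectic direct sum decomposition, and the normalizations (rescaling $v$, modifying $w$ by elements of lower $N$-height) put each block into the exact matrix form stated. The dimension count of $\ker(A - \operatorname{Id})$ then follows by inspecting how many basis vectors each block contributes to the eigenspace.

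For the signature statement, I would show that $\hat{Q}_k$ is symmetric on $\ker((A - \lambda\operatorname{Id})^{2k})$ by iterating the structural identity: for $v,w$ in this kernel,
\begin{equation*}
\Omega(N^{k}v, N^{k-1}w) - \Omega(N^{k-1}v, N^{k}w) = \sum_{j} (\text{terms involving } N^{\geq 2k}v \text{ or } N^{\geq 2k}w) = 0.
\end{equation*}
Then I would compute $\hat{Q}_k$ block by block using the explicit normal form. On a $d_j = 0$ hyperbolic block the form $\hat{Q}_k$ is hyperbolic and contributes $0$ to the signature. On a block with $d_j = \pm 1$ of size $r_j$, the form $\hat{Q}_k$ restricted to the kernel of the appropriate power of $N$ has rank exactly $1$ and is positive/negative depending on the product $\lambda \cdot d_j$, for the unique $k$ with $2k-1 \leq r_j -1 < 2k$; smaller or larger $k$ give zero contribution. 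Summing over blocks yields $\sum_j d_j = \lambda \sum_{k \geq 1} \operatorname{sign}(\hat{Q}_k)$.

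The main obstacle is the cyclic decomposition step: producing, in the hyperbolic case, a cyclic vector $w$ whose orbit $Z(w)$ is genuinely symplectically paired with $Z(v)$ and symplectically orthogonal to the already-constructed blocks. This requires an inductive correction of $w$ by terms of strictly lower $N$-height and a careful check, using the structural identity, that the correction terminates and that the block sizes $r_j$ are symplectic invariants of the pair $(A|_{V_{[\lambda]}}, \Omega)$. Once this decomposition is in hand, the rest of the statement — including the explicit form of the off-diagonal block $C(r_j,d_j,\lambda) = J(\lambda,r_j)^{-1}\operatorname{diag}(0,\ldots,0,d_j)$ — is an elementary computation.
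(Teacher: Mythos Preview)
The paper does not prove this proposition: it is imported from the author's companion paper \cite{Gutt} and stated here without argument, so there is no in-paper proof to compare against. Your outline is nonetheless the standard approach and matches the strategy of that reference: pass to the nilpotent $N=A-\lambda\Id$ on the symplectic subspace $V_{[\lambda]}$, use the identity $\Omega(Nv,w)+\Omega(v,Nw)+\lambda\,\Omega(Nv,Nw)=0$, and split off cyclic pieces inductively, distinguishing a single symplectic cyclic chain (one Jordan block of even size $2r_j$, giving $d_j=\pm1$) from a pair of isotropic chains ($d_j=0$, with $r_j$ odd).

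Two points in your sketch need correcting. First, the parity remark is attached to the wrong branch: it is in the isotropic-pair case that $r_j$ must be odd, not in the ``$Z(v)$ already symplectic'' case. Second, your condition ``the unique $k$ with $2k-1\le r_j-1<2k$'' for the signature contribution is wrong; a $d_j\neq0$ block carries a single Jordan block of size $2r_j$, and a direct computation on the cyclic basis shows it contributes $\lambda d_j$ to $\sign\hat{Q}_k$ exactly at $k=r_j$, with all other $\hat{Q}_k$ vanishing on that block.
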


\begin{lemma}\label{charRS1}
The  Robbin-Salamon index is characterized by the following properties:
\begin{enumerate}
\item{(Generalization)} it is a correspondence $\mu_{\textrm{RS}}$ which associates a half integer to any continuous path $\psi:[a,b]\rightarrow \Sp(\R^{2n},\Omega_0)$ of symplectic matrices and it coincides with
$\mu_{\textrm{CZ}}$ on paths starting from the identity matrix and ending at a matrix for which $1$ is not an eigenvalue;
\item \emph{(Naturality)} if $\phi,\psi : \left[0,1\right] \rightarrow \Sp(\R^{2n},\Omega_0)$, we have
		 	$\mu_{\textrm{RS}}(\phi\psi\phi^{-1}) = \mu_{\textrm{RS}}(\psi)$;
\item\emph{(Homotopy)} it is invariant under homotopies with fixed end points;
\item\emph{(Catenation)} it is additive under catenation of paths;
\item\emph{(Product)} it has  the product property $\mu_{\textrm{RS}}(\psi'\diamond \psi'')=\mu_{\textrm{RS}}(\psi')+\mu_{\textrm{RS}}(\psi'')$;
\item\emph{(Zero)} it vanishes on any path $\psi:[a,b]\rightarrow\Sp(\R^{2n},\Omega)$ of matrices such that $\dim\Ker (\psi(t)-\Id) =k$ is constant on $ [a,b]$;
\item\emph{(Shear)}on a symplectic shear $,\psi : \left[0,1\right] \rightarrow \Sp(\R^{2n},\Omega_0)$ of the form $$\psi_t =
	\left(\begin{smallmatrix}
		\Id & -tB \\
		0 & \Id
	\end{smallmatrix} \right)=\exp t \left(\begin{smallmatrix}
		0 &- B \\
		0 & 0
	\end{smallmatrix} \right)=\exp t J_0  \left(\begin{smallmatrix}
		0 &0 \\
		0 & B
	\end{smallmatrix} \right)$$
	with $B$ symmetric, it is equal to $
		\mu_{\textrm{RS}}(\psi)= \half \sign B.$
\end{enumerate}
\end{lemma}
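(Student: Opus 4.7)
The plan has two halves: verification of the seven properties for $\mu_{RS}$ (existence), and uniqueness. Existence is essentially a bookkeeping exercise. Generalization is the immediately preceding proposition; naturality is Proposition \ref{prop:invariance}; homotopy, catenation, product, and zero are recorded just after Definition \ref{RS}. The shear property follows from Proposition \ref{RSforshears}: for $\psi_t=\bigl(\begin{smallmatrix}\Id & -tB\\ 0 & \Id\end{smallmatrix}\bigr)$ one has $B(0)=0$ and $B(1)=-B$, hence $\mu_{RS}(\psi)=\half\sign 0-\half\sign(-B)=\half\sign B$.

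For uniqueness, let $\mu'$ satisfy the seven properties. Given $\psi:[0,1]\to\Sp(\R^{2n},\Omega_0)$, I would build an extension $\Psi:[-1,2]\to\Sp(\R^{2n},\Omega_0)$ of the shape used in Theorem \ref{RSexpl}: $\Psi(\pm 1)=W^\pm$; $\Psi|_{[-1,-\half]}$ and $\Psi|_{[\frac{3}{2},2]}$ lie entirely in $\Sp^\star(\R^{2n},\Omega_0)$; and $\Psi|_{[-\half,0]}=\psi^\star(0)\diamond \phi_0$, $\Psi|_{[1,\frac{3}{2}]}=\psi^\star(1)\diamond \phi_1$, where $\phi_0,\phi_1$ have only positive real eigenvalues and interpolate between $\operatorname{diag}(e^{-1}\Id,e\Id)$ and the eigenvalue-$1$ parts $\psi^{(1)}(0)$, $\psi^{(1)}(1)$. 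Prepending the standard path $\psi_2^\pm:t\mapsto\exp(t\pi J_0 S^\pm)$ from $\Id$ to $W^\pm$ produces a path in $\SP(n)$, on which generalization forces $\mu'$ to equal $\mu_{CZ}=\mu_{RS}$; the same property applied to $\psi_2^\pm$ alone, combined with catenation additivity, forces $\mu'(\Psi)=\mu_{RS}(\Psi)$.

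A further catenation decomposes $\Psi$ into $\psi$ and its four flanking pieces. On the subintervals $[-1,-\half]$ and $[\frac{3}{2},2]$ one has $\dim\Ker\bigl(\Psi(t)-\Id\bigr)=0$ throughout, so the zero property gives $\mu'=\mu_{RS}=0$. It remains to show $\mu'$ and $\mu_{RS}$ agree on $\Psi|_{[-\half,0]}$ and $\Psi|_{[1,\frac{3}{2}]}$. Applying the product property to $\psi^\star\diamond\phi$ together with the zero property for the constant factor $\psi^\star$, this reduces to agreement on $\phi_0$ and $\phi_1$. Next, naturality lets me place $\psi^{(1)}(\cdot)$ in the normal form of Proposition \ref{normalforms1}, a symplectic direct sum of blocks $M_j$ with parameters $(r_j,d_j)$, $d_j\in\{0,\pm 1\}$; the product property reduces matters to a single block. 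For each block I would construct an explicit path within its symplectic subspace, from $\operatorname{diag}(e^{-1}\Id,e\Id)$ to $M_j$ through positive-real-spectrum matrices, and decompose it up to homotopy with fixed endpoints into a piece of constant kernel dimension and, when $d_j\neq 0$, a conjugate of a shear $\bigl(\begin{smallmatrix}\Id & -tB\\0 & \Id\end{smallmatrix}\bigr)$ with $\sign B=d_j$. The shear and zero properties then pin $\mu'$ of each block's path to exactly $\half d_j$; summing and using $\sum_j d_j=\sum_{k\ge 1}\sign\hat Q_k$ from Proposition \ref{normalforms1} recovers precisely the Robbin-Salamon contribution $\half\sum_k\sign\hat Q_k^{\psi(0)}$ (and symmetrically $-\half\sum_k\sign\hat Q_k^{\psi(1)}$ at the other endpoint).

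The main obstacle is the block-by-block construction: for each normal-form block $M_j$, including the delicate case $d_j=0$ (where $r_j$ must be odd and one needs a path of constant kernel dimension in its interior), one must exhibit an explicit positive-real-spectrum path from the diagonal reference matrix to $M_j$ that the seven axioms force to have value $\half d_j$. The nilpotent structure of $(M_j-\Id)$ and the choice of conjugating element used to reduce to the standard shear form $\bigl(\begin{smallmatrix}\Id & -tB\\0 & \Id\end{smallmatrix}\bigr)$ with controlled $\sign B$ are the technical heart of the argument; all other steps are routine applications of naturality, product, catenation, and homotopy invariance.
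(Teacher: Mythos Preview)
Your proposal is correct and shares the paper's technical core: both reduce, via naturality and product, to the normal-form blocks of Proposition~\ref{normalforms1}, and both need an explicit path from each block $A^{(1)}_{r_j,d_j}$ to a matrix in $\Sp^\star$ whose index is forced to be $\half d_j$ by the zero and shear axioms. The paper carries out precisely this construction (your ``main obstacle'') as a catenation of three explicit pieces $\psi_1,\psi_2,\psi_3$, together with a nontrivial determinant computation verifying that $\dim\Ker\bigl(\psi_1(t)-\Id\bigr)$ stays constant.

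The organizational difference is that the paper first reduces to paths starting at $\Id$: since $\Sp(\R^{2n},\Omega_0)$ is connected, any path $\chi$ is a catenation-difference of two paths from $\Id$, so it suffices to determine $\mu'$ on those. For a path from $\Id$ ending at $A$, one then extends \emph{forward only} to reach $\Sp^\star$; the extended path lies in $\SP(n)$, generalization pins its $\mu'$, and the block construction pins $\mu'$ of the extension piece. Your bilateral extension via the Theorem~\ref{RSexpl} framework reaches the same conclusion but treats both endpoints and imports the extra (here unnecessary) constraint that $\phi_0,\phi_1$ have only positive real eigenvalues. One small slip in your write-up: the normal-form basis for $\psi^{(1)}(\cdot)$ should be fixed \emph{before} building $\phi_0,\phi_1$, so that both endpoints of each $\phi_i$ (the diagonal reference $\operatorname{diag}(e^{-1}\Id,e\Id)$ and the normal-form matrix) already respect the same block decomposition; conjugating after the fact would scramble the reference endpoint and block the product reduction.
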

\begin{proof}
We have seen in the previous section that the index $\mu_{\textrm{RS}}$ defined by Robbin and Salamon satisfies all the above properties.
To see that those properties characterize this index, it is enough to show (since the group $\Sp(\R^{2n},\Omega_0)$ is connected and since we have the catenation property)
that those properties determine the index of any path starting from the identity. Since it must be a generalization of the Conley-Zehnder index
and must be additive for catenations of paths, it is enough to show that any symplectic matrix $A$ which admits $1$ as an eigenvalue can be linked to a matrix $B$ which does not admit 
$1$ as an eigenvalue by a continuous path whose index is determined by the properties stated. From proposition
 \ref{normalforms1},   there is a basis of $\R^{2n}$ such that $A$ is the  symplectic direct sum of a matrix which does not admit $1$ as eigenvalue and 
matrices of the form 
$$
A^{(1)}_{r_j,d_j}:=
\left(\begin{smallmatrix}
J(1,r_j)^{-1}& J(1,r_j)^{-1} \operatorname{diag}(0,\ldots, 0,d_j)\\
		0&  J(1,r_j)^{\tau}
		\end{smallmatrix}\right);
$$
with $d_j$ equal to $0,1$ or $-1$. The dimension of the eigenspace of eigenvalue $1$  for 
$A^{(1)}_{r_j,d_j}$ is equal to $1$ if $d_j\neq 0$ and is equal to $2$ if $d_j=0$.
 In view of the naturality and the product property of the index, we can consider a symplectic direct sum of paths with the constant path on the symplectic subspace where $1$ is not an eigenvalue and we just have to build a path  in $\Sp(\R^{2r_j},\Omega_0)$ from $A^{(1)}_{r_j,d_j}$
 to a matrix which does not admit $1$ as eigenvalue and whose index is determined by the properties given in the statement.
 This we do by the catenation of three paths : we first build the path $\psi_1: [0,1] \rightarrow \Sp(\R^{2r_j},\Omega_0)$
defined by 
$$ \psi_1(t):=\left(\begin{smallmatrix}
D(t,r_j)^{-1}& D(t,r_j)^{-1}\operatorname{diag}\bigl(c(t),0,\ldots,0,d(t)\bigr)\\
		0& D(t,r_j)^{\tau}
		\end{smallmatrix}\right)
 $$
 with  $D(t,r_j)=\left(\begin{smallmatrix}
 	1&1-t&0&\ldots &\ldots &0\\
	0&e^t&1-t&0&\ldots &0\\
	\vdots &0&\ddots &\ddots &0&\vdots\\
	0&\ldots &0&e^t&1-t &0\\
	0&\ldots &\ldots &0&e^t&1-t\\
	0&\ldots &\ldots &\ldots &0&e^t
\end{smallmatrix}\right)$, \\
and with $c(t)=td_j, ~d(t)=(1-t)d_j$. 
 Observe  that $\psi_1(0)=A^{(1)}_{r_j,d_j}$ and $\psi_1(1)$ is the symplectic direct sum of 
$\left(\begin{smallmatrix}
	1&c(1)=d_j\\
	0& 1
\end{smallmatrix}\right)$ and
$\left(\begin{smallmatrix}
	e^{-1}\Id_{r_j-1}  &0\\
	0&e\Id_{r_j-1}
\end{smallmatrix}\right)$ and this last matrix does not admit $1$ as eigenvalue.

Clearly  $\dim\ker\bigl(\psi_1(t)-\Id\bigr)=2$ for all $t\in [0,1]$ when $d_j=0$; we now prove that $\dim\ker(\psi_1(t)-\Id)=1$ for all $t\in [0,1]$ when $d_j\neq0$. Hence the index of $\psi_1$ must always be zero by the zero property.\\
To prove that $\dim\ker(\psi_1(t)-\Id)=1$ we have to show the non vanishing of  the determinant of the $2r_j-1\times 2r_j-1$ matrix
$$
	{\left(\begin{smallmatrix}
		E^t_{12}&\ldots&\ldots &E^t_{1r_j}&c(t)&0&\ldots&0&E^t_{1r_j}d(t)\\
		e^{-t}-1&E^t_{23}&\ddots &E^t_{2r_j}&0&0&\ldots&0&E^t_{2r_j}d(t)\\
		0 &\ddots&\ddots &\vdots&\vdots&\vdots&&\vdots&\vdots\\
		\vdots&\ddots  &e^{-t}-1&E^t_{r_j-1\, r_j}&0&0&\ldots&0&E^t_{r_j-1\, r_j}d(t)\\
		0&\ldots &0 &e^{-t}-1&0&0&\ldots&0&e^{-t}d(t)\\
		0&\ldots &0 &0&1-t&e^t-1&0&\ddots&0\\
		\vdots&\ldots&\vdots&0&0&1-t&e^t-1&\ddots&0\\
		\vdots&\ldots&\vdots&0&\ldots&0&\ddots&\ddots&0\\[1mm]
		0&\ldots&0  &0&0&\ldots&0&1-t&e^t-1\\
	\end{smallmatrix}\right) }    
 $$
where $E^t:=D(t,r_j)^{-1}$  is upper triangular. 
This determinant is equal to
$$
(-1)^{r_j+1}c(t)(e^{-t}-1)^{r_j-1}(e^{t}-1)^{r_j-1}+(-1)^{r_j-1}d(t)(1-t)^{r_j-1}\det E'(t)
$$
where $E'(t)$ is obtained by deleting the first column and the last line in $E(t)-\Id$ so given by
the $(r_j-1)\times(r_j-1)$ matrix
$$
	{{ \left(\begin{smallmatrix}
		(t-1)e^{-t}&(t-1)^2e^{-2t}&\ldots&\ldots&(t-1)^{r_j-1}e^{-(r_j-1)t}\\[2mm]
		e^{-t}-1&(t-1)e^{-2t}&(t-1)^2e^{-3t}&\ldots&(t-1)^{r_j-2}e^{-(r_j-1)t}\\
		0&e^{-t}-1&(t-1)e^{-2t}&\ddots &(t-1)^{r_j-3}e^{-(r_j-2)t}\\
              \vdots&\ddots\qquad\ddots&&\ddots\quad\quad&\ddots\qquad\vdots\qquad\quad&\\
		\vdots&\qquad\ddots&e^{-t}-1&\quad (t-1)e^{-2t}&(t-1^2)e^{-3t}\\[2mm]
		0&\ldots &0&e^{-t}-1&(t-1)e^{-2t}\\
	\end{smallmatrix}\right) }}.                                                               
$$
Thus  $\det E'(t)=(t-1)(e^{-t}-(e^{-t}-1)) \det F_{r_j-2}(t)$ where
$$
	F_m(t):=  {{ \left(\begin{smallmatrix}
		(t-1)e^{-2t}&(t-1)^2e^{-3t}&\ldots&\ldots&(t-1)^{r_j-2}e^{-(r_j-1)t}\\
		e^{-t}-1&(t-1)e^{-2t}&\ddots &\ldots&(t-1)^{r_j-3}e^{-(r_j-2)t}\\
		0&e^{-t}-1&(t-1)e^{-2t}&\ddots &(t-1)^{r_j-3}e^{-(r_j-2)t}\\
              \vdots&\ddots\qquad\ddots&\qquad\ddots&\quad\quad&\ddots\qquad\vdots\qquad\quad&\\
		\vdots&\qquad\ddots&e^{-t}-1&\quad (t-1)e^{-2t}&(t-1^2)e^{-3t}\\[2mm]
		0&\ldots &0&e^{-t}-1&(t-1)e^{-2t}\\
	\end{smallmatrix}\right) }}.                                                     
$$
and  we have $\det F_m(t)= ((t-1)e^{-2t}-(e^{-t}-1)(t-1)e^{-t})\det F_{m-1}(t)=(t-1)e^{-t}\det F_{m-1}(t)$
so that, by induction on $m$, $\det F_m(t)=(t-1)^me^{-(m+1)t}$ hence the determinant we have to study is\\ 
$
(-1)^{r_j-1}c(t)(2-e^t-e^{-t})^{r_j-1}+d(t)(t-1)^{r_j} \det F_{r_j-2} (t)$
which is equal to \\
$
(-1)^{r_j-1}c(t)(2-e^t-e^{-t})^{r_j-1}+d(t)(t-1)^{r_j}(t-1)^{r_j-2}e^{-(r_j-1)t}$ 
hence to 
$$
	c(t)(e^t+e^{-t}-2)^{r_j-1}+d(t)(1-t)^{2r_j-2}e^{-(r_j-1)t}
$$
 which never vanishes if $c(t)=td_j$ and
$d(t)=(1-t)d_j$ since $e^t+e^{-t}-2$ and $(1-t)$ are $\ge 0$.

We then  construct a path $\psi_2: [0,1] \rightarrow \Sp(\R^{2r_j},\Omega_0)$ which is constant on the symplectic subspace where $1$ is not an eigenvalue  and which
 is a symplectic shear on the first two dimensional symplectic vector space, i.e. 
 $$
 	\psi_2(t):=
	\left(
		\begin{smallmatrix}
			1&(1-t)d_j\\
			0& 1
		\end{smallmatrix}
	\right)
	\diamond
	\left(
		\begin{smallmatrix}
			e^{-1}\Id_{r_j-1}  &0\\
			0&e\Id_{r_j-1}
		\end{smallmatrix}
	\right);                                           
 $$  
then the index of $\psi_2$ is equal to $\half\sign d_j$. 
Observe that $\psi_2$ is constant if $d_j=0$; then the index of $\psi_2$ is zero. In all cases $\psi_2(1)=\Id_2\diamond  \left(\begin{smallmatrix}e^{-1}\Id_{r_j-1}  &0\\0&e\Id_{r_j-1}\end{smallmatrix}\right)$. \\
We then build $\psi_3: [0,1] \rightarrow \Sp(\R^{2r_j},\Omega_0)$ given by
$$
	\psi_3(t):=
	\left(
		\begin{smallmatrix}
			e^{-t}& 0 \\ 
			0 & e^{t}
                  \end{smallmatrix}
	\right)
	\diamond    
	\left(
		\begin{smallmatrix}
			e^{-1}\Id_{r_j-1}  &0\\
			0&e\Id_{r_j-1}  
		\end{smallmatrix}
	\right)
$$
which is the direct sum of a path whose Conley-Zehnder index is known and a constant path whose index is zero.
Clearly $1$ is not an eigenvalue of $\psi_3(1)$.
\end{proof}

Combining the above with the characterization of the Conley-Zehnder index, we now prove:
\begin{lemma}\label{charRS2}
	The Robbin-Salamon index for a path of symplectic matrices is characterized by the following properties:
	\begin{itemize}
		\item\emph{(Homotopy)} it is invariant under homotopies with fixed end points;
		\item\emph{(Catenation)} it is additive under catenation of paths;
		\item\emph{(Zero)} it vanishes on any path $\psi:[a,b]\rightarrow\Sp(\R^{2n},\Omega)$ of matrices such that $\dim\Ker (\psi(t)-\Id) =k$ is constant on $ [a,b]$;
		\item\emph{(Product)} it has  the product property $\mu_{\textrm{RS}}(\psi'\diamond \psi'')=\mu_{\textrm{RS}}(\psi')+\mu_{\textrm{RS}}(\psi'')$;
		\item\emph{(Signature)} if $S=S\tr \in \R^{2n\times 2n}$ is a symmetric non degenerate matrix with all eigenvalues of absolute value
			$<2\pi $ and if $\psi(t) = \operatorname{exp}(J_0St)$ for $t\in \left[0,1\right],$ then
			$\mu_{\textrm{RS}}(\psi) = \half \sign S$ where $\sign S$ is the signature of $S$;
		\item\emph{(Shear)} if $\psi_t = \exp t J_0  \left(\begin{smallmatrix}
		0 &0 \\
		0 & B
	\end{smallmatrix} \right)$ for $t\in \left[0,1\right],$
	with $B$ symmetric, then $
		\mu_{\textrm{RS}}(\psi)= \half \sign B.$
	\end{itemize}
\end{lemma}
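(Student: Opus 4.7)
The plan is to derive this characterization from Lemma~\ref{charRS1}, which characterizes $\mu_{\textrm{RS}}$ by seven properties. Five of them---Homotopy, Catenation, Zero, Product, and Shear---are common to both lists, so it is enough to show that any correspondence $\mu'$ satisfying the six listed properties also satisfies Generalization ($\mu'=\mu_{\textrm{CZ}}$ on $\SP(n)$) and Naturality. An inspection of Lemma~\ref{charRS1}'s proof shows that only the constant-conjugator case of Naturality, $\mu'(\phi_{0}\psi\phi_{0}^{-1})=\mu'(\psi)$ for a fixed $\phi_{0}\in\Sp(\R^{2n},\Omega_0)$, is actually used (to bring the endpoint $\psi(1)$ into the block-diagonal normal form of Proposition~\ref{normalforms1}).

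Constant-conjugator Naturality follows from Homotopy, Catenation, and Zero by a collar argument. Since $\Sp(\R^{2n},\Omega_0)$ is connected, pick a continuous path $\gamma:[0,1]\to\Sp(\R^{2n},\Omega_0)$ from $\Id$ to $\phi_{0}$, and for each $s\in[0,1]$ consider the path $\tilde\psi_s$ obtained by catenating the conjugation collar $\tau\mapsto\gamma(s\tau)\psi(0)\gamma(s\tau)^{-1}$, the main piece $\tau\mapsto\gamma(s)\psi(\tau)\gamma(s)^{-1}$, and the reverse conjugation collar at $\psi(1)$. This is a homotopy with fixed endpoints $\psi(0)$ and $\psi(1)$ from $\tilde\psi_0\simeq\psi$ to $\tilde\psi_1$, the latter being the catenation of the two collars around $\phi_{0}\psi\phi_{0}^{-1}$. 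Each collar has constant $\dim\Ker(\cdot-\Id)$ because conjugation preserves it, so Zero kills it; Homotopy and Catenation then give $\mu'(\psi)=\mu'(\phi_{0}\psi\phi_{0}^{-1})$.

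For Generalization I invoke Proposition~\ref{caract}: on $\SP(n)$ the Conley--Zehnder index is characterized by Homotopy, Signature, and the Loop property $\mu(\phi\psi)=\mu(\psi)+2\mu_{\textrm{Maslov}}(\phi)$ for loops $\phi$ based at $\Id$. Since the pointwise product $\phi\psi$ is homotopic rel endpoints to the catenation $\phi\ast\psi$ (standard $H$-space homotopy in the topological group $\Sp(\R^{2n},\Omega_0)$), Catenation reduces Loop to the identity $\mu'(\phi)=2\mu_{\textrm{Maslov}}(\phi)$ for loops. By Catenation this defines a group homomorphism $\pi_{1}\bigl(\Sp(\R^{2n},\Omega_0)\bigr)\cong\Z\to\half\Z$, so it suffices to evaluate on the Maslov-index-$1$ generator $\Phi=\Phi^{(2)}\diamond\Id_{2n-2}$ with $\Phi^{(2)}(t)=\left(\begin{smallmatrix}\cos 2\pi t & -\sin 2\pi t\\ \sin 2\pi t & \cos 2\pi t\end{smallmatrix}\right)$; Product together with Zero (applied to the constant $\Id_{2n-2}$ factor) reduces to computing $\mu'(\Phi^{(2)})$ in $\Sp(\R^2,\Omega_0)$.

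The main obstacle is that $\Phi^{(2)}(t)=\exp(2\pi tJ_{0})$ corresponds to $S=2\pi\Id_2$, which sits exactly at the boundary $\|S\|=2\pi$ where Signature fails to apply. I bypass this by rewriting $\Phi^{(2)}$ up to homotopy rel endpoints as the catenation $\eta\ast\zeta^{-}$, where $\eta(t)=\exp(\pi tJ_{0})$ and $\zeta(t)=\exp(-\pi tJ_{0})$ both go from $\Id$ to $-\Id$ via opposite half-rotations. Both have $\|S\|=\pi<2\pi$, so Signature yields $\mu'(\eta)=\half\sign(\pi\Id_2)=1$ and $\mu'(\zeta)=\half\sign(-\pi\Id_2)=-1$. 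Since $\zeta\ast\zeta^{-}$ is null-homotopic rel endpoints and hence has index $0$ by Homotopy together with Zero applied to the constant path it deforms to, Catenation forces $\mu'(\zeta^{-})=-\mu'(\zeta)=1$. Summing, $\mu'(\Phi^{(2)})=\mu'(\eta)+\mu'(\zeta^{-})=2$, as needed. With Generalization and constant-conjugator Naturality in hand, the proof of Lemma~\ref{charRS1} applies verbatim, reducing $\mu'$ on every path starting from $\Id$ to the three explicit building blocks $\psi_1$, $\psi_2$, $\psi_3$ whose indices are fixed respectively by Zero, Shear, and Signature (together with Product).
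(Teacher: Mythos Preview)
Your proof is correct and follows essentially the same route as the paper's: derive (constant-conjugator) Naturality from Homotopy, Catenation, and Zero via the collar argument you describe; reduce Generalization to the Loop property via Proposition~\ref{caract}; and establish Loop on the generating rotation by splitting it into two half-rotations $\exp(\pm\pi t J_0)$ whose indices are fixed by Signature. The paper asserts the Naturality implication without spelling it out, and for the building block $\psi_3$ it appeals to Generalization rather than to Signature directly, but these are cosmetic differences.
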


\begin{proof}
Remark first that the invariance by homotopies with fixed end points, the additivity under catenation and the zero property
imply the naturality; they also imply the constancy on the components of $ \SP(n).$ 
The signature property stated above is the signature property which arose in the characterization of the Conley-Zehnder index given in proposition \ref{caract}.
To be sure that our index is a generalization of the Conley-Zehnder index, there remains just  to prove the loop property.
Since the product of a loop $\phi$ and a path $\psi$ starting at the identity is homotopic to the catenation of $\phi$ and $\psi$,
it is enough to prove that the index of a loop $\phi$  with $\phi(0) = \phi(1) = \Id$ is given by
$ 2 \operatorname{deg}(\rho \circ \phi)$.
Since two loops $\phi$ and $\phi'$ are homotopic if and only if $\deg(\rho\circ\phi)=\deg(\rho\circ\phi'),$ it is enough to consider the loops $\phi_n$ defined by $\phi_n(t):=
		\left(\begin{smallmatrix} 
			\cos 2\pi nt & -\sin 2\pi nt \\
			\sin 2\pi nt & \cos 2\pi nt \end{smallmatrix}\right)\diamond\Id $;
		  since $\phi_n(t)=\bigl(\phi_1(t)\bigr)^n$, it is enough to show, using the homotopy, catenation, product and zero properties that 	the index of the loop given by $\phi(t)=	\left(\begin{smallmatrix} 
			\cos 2\pi t & -\sin 2\pi t \\
			\sin 2\pi t & \cos 2\pi t \end{smallmatrix}\right)$ for $t\in [0,1]$ is equal to $2$. This is true, using the signature property,			writing $\phi$ as the catenation of the path $\psi_1(t):=\phi(\frac{t}{2})=\exp tJ_0\left(\begin{smallmatrix} 
			\pi  & 0\\
			0 & \pi  \end{smallmatrix}\right)$ for $t\in [0,1]$  whose index is $1$ and the path $\psi_2(t):=\phi(\frac{t}{2})=\exp tJ_0\left(\begin{smallmatrix} 
			\pi  & 0\\
			0 & \pi  \end{smallmatrix}\right)$ for $t\in [1,2]$. 
			We introduce the  path in the reverse direction  $\psi^-_2(t):=\exp -tJ_0\left(\begin{smallmatrix} 
			\pi  & 0\\
			0 & \pi  \end{smallmatrix}\right)$ for  $t\in [0,1]$ whose index is $-1$; since the catenation of $\psi^-_2$ and $\psi_2$ is homotopic to
			      the constant path whose index is zero,  the index of  $\phi_1$ is given by the index of $\psi_1$ minus the index of $\psi^-_2$
			      hence is equal to $2$.
\end{proof}
We are now ready to prove the characterization of the Robbin-Salamon index stated in the introduction.\\

\begin{proof}[Proof of theorem \ref{thm:Thechar}] 
Observe that any symmetric matrix can be written as the symplectic direct sum of a non degenerate symmetric matrix $S$ and a matrix $S'$ of the form
$ \left(\begin{smallmatrix}
		0 &0 \\
		0 & B
	\end{smallmatrix} \right)$ where $B$ is symmetric and may be degenerate. The index of the path $\psi_t=\exp t J_0S'$ is equal to the index of the path $\psi'_t=\exp t \lambda J_0S'$
	for any $\lambda>0$. Hence the signature and shear conditions, in view of the product condition, can be simultaneously written as: 
	if $S=S\tr \in \R^{2n\times 2n}$ is a symmetric  matrix with all eigenvalues of absolute value
			$<2\pi $ and if $\psi(t) = \operatorname{exp}(J_0St)$ for $t\in \left[0,1\right],$ then
			$\mu_{\textrm{RS}}(\psi) = \half \sign S$. This is the normalization condition stated in the theorem.
			
From Lemma \ref{charRS2}, we just have to prove that the product property is a consequence of the other properties. We prove it for paths with values 
in $\Sp(\R^{2n},\Omega_0)$ by induction on $n$, the case $n=1$ being obvious.
Since $\psi'\diamond\psi''$ is homotopic with fixed endpoints to the catenation 
of $\psi'\diamond\bigl(\psi''(0)\bigr)$ and $\bigl(\psi'(1)\bigr)\diamond\psi''$, it is enough to show that the
index of $A\diamond\psi$ is equal to the index of $\psi$ for any fixed $A\in \Sp(\R^{2n'},\Omega_0)$ with $n'<n$ and any
continuous path $ \psi : [0,1] \rightarrow  \Sp(\R^{2n''},\Omega_0)$ with $n''<n$.

Using the proof of lemma  \ref{charRS1},  any symplectic matrix $A$ can be linked by a path $\phi(s)$
with constant dimension of the $1$-eigenspace to a matrix of the form $\operatorname{exp}(J_0S')$ with $S'$ a 
symmetric  $n'\times n'$ matrix with all eigenvalues of absolute value $<2\pi $. The index of $A\diamond\psi$
is equal to the index of $\operatorname{exp}(J_0S')\diamond \psi$; indeed  $A\diamond\psi$ is homotopic with fixed endpoints
to the catenation of the three paths $\phi_s\diamond\psi(0)$, $\operatorname{exp}(J_0S')\diamond \psi$ and the path $\phi_s\diamond\psi (1)$
in  the reverse order, and the index of the first and third paths are zero since the dimension of the $1$-eigenspace does not vary along those paths.

Hence it is enough to show that the index of $\operatorname{exp}(J_0S')\diamond \psi$ is the same as the index of $\psi$. This is true because  the map 
$\mu$
sending  a path $\psi$  in $\Sp(\R^{2n''},\Omega_0)$ (with $n''<n$) to the index of $\operatorname{exp}(J_0S')\diamond \psi$ has the four properties stated in the theorem,
and these characterize the Robbin-Salamon index for those paths by induction hypothesis. 
It is clear that $\mu$ is invariant under homotopies, additive for catenation and equal to
zero on paths $\psi$ for which the dimension of the $1$-eigenspace is constant.  Furthermore
$\mu(\exp t(J_0S))$ which is the index of $\exp(J_0S')\diamond \exp t(J_0S)$ is equal to $\half \sign S$, because the path
$\exp tJ_0(S'\diamond S)$ whose index is $\half \sign (S'\diamond S)= \half \sign S' + \half \sign S$ is homotopic with fixed 
endpoints with the catenation of $\exp t(J_0S')\diamond \Id=\exp tJ_0(S'\diamond 0)$, whose index is $\half \sign S'$,
and the path $\exp(J_0S')\diamond \exp t(J_0S)$.
\end{proof}			
	
\section{A formula for the Robbin-Salamon index}\label{ssection:formula}

Let $\psi : [0,1]\rightarrow\Sp(\R^{2n},\Omega_{0})$ be a path of symplectic matrices.
The symplectic transformation $\psi(1)$ of $V=\R^{2n}$ decomposes as
$$
	\psi(1) = \psi^{\star}(1)\diamond \psi^{(1)}(1)
$$
where $ \psi^{\star}(1)$  does not admit $1$ as eigenvalue and
$\psi^{(1)}(1)$ is the restriction of $\psi(1)$ to the generalized eigenspace of eigenvalue $1$  
$$
	\left.\psi(1)\right\vert_{V_{[1]}}.
$$
By proposition
 \ref{normalforms1}, there exists a
symplectic matrix $A$ such that $A\psi^{(1)}(1) A^{-1}$ is equal to
\begin{eqnarray}
	&&\psi^{\star}(1)\diamond
		\left(
			\begin{smallmatrix}
				J(1,r_1)^{-1}&C\bigl(r_1,d_1^{(1)},1\bigr)\\
				0& J(1,r_1)^{\tau}
			\end{smallmatrix}
		\right)
		\diamond\cdots\diamond
		\left(
			\begin{smallmatrix}
				J(1,r_k)^{-1}&C\bigl(r_k,d_k^{(1)},1\bigr)\\
				0& J(1,r_k)^{\tau}
			\end{smallmatrix}
		\right)
		\qquad\qquad~ \label{int}\\
	&&\qquad\qquad\qquad\qquad \diamond
		\left(
			\begin{smallmatrix}
				J(1,s_1)^{-1}&0\\
				0& J(1,s_1)^{\tau}
			\end{smallmatrix}
		\right)
		\diamond\cdots\diamond
		\left(
			\begin{smallmatrix}
				J(1,s_l)^{-1}&0\\
				0& J(1,s_l)^{\tau}
			\end{smallmatrix}
		\right)
		\nonumber
\end{eqnarray}
with  each $d^{(1)}_j=\pm 1$.
Since $\Sp(\R^{2n},\Omega_{0})$ is connected, there is a path $\varphi:[0,1] \rightarrow\Sp(\R^{2n},\Omega_{0})$ such that $\varphi(0)=\id$ and $\varphi(1)=A$.
We define
$$
	\psi_{\uppercase\expandafter{\romannumeral 1}} : [0,1] \rightarrow\Sp(\R^{2n},\Omega_{0}): t\mapsto \varphi(t)\psi(t)\bigl(\varphi(t)\bigr)^{-1}.
$$
It is a  path from $\psi(1)$ to the matrix defined in \ref{int}. Clearly, $\mu_{RS}(\psi_{\uppercase\expandafter{\romannumeral 1}}) = 0$ and $\rho$ is constant
on $\psi_{\uppercase\expandafter{\romannumeral 1}}$.

Let $\psi_{\uppercase\expandafter{\romannumeral 2}} : [0,1] \rightarrow\Sp(\R^{2n},\Omega_{0})$ be the path from
$\psi_{\uppercase\expandafter{\romannumeral 1}}(1)$ to 
$$
	\psi^{\star}(1)\diamond
	\left(
		\begin{smallmatrix}
			1&d^{(1)}_1\\
			0&1
		\end{smallmatrix}
	\right)
	\diamond\cdots\diamond
	\left(\begin{smallmatrix}
		1&d^{(1)}_k\\
		0&1
	\end{smallmatrix}\right)\diamond
		\left(\begin{smallmatrix}
		1&0\\
		0&1
	\end{smallmatrix}\right)
	\diamond\cdots\diamond
	\left(\begin{smallmatrix}
		1&0\\
		0&1
	\end{smallmatrix}\right)\diamond
	\left(\begin{smallmatrix}
		e^{-1}\id&0\\
		0& e\id
	\end{smallmatrix}\right)
$$
defined as in the proof of lemma \ref{charRS1} in each block by
$$
	\left(
		\begin{smallmatrix}
			D(t,r_j)^{-1}& D(t,r_j)^{-1}\operatorname{diag}\bigl(td^{(1)}_j,0,\ldots,0,(1-t)d^{(1)}_j\bigr)\\
			0& D(t,r_j)^{\tau}
		\end{smallmatrix}
	\right)
 $$
 with  $D(t,r_j)=
 \left(
 	\begin{smallmatrix}
 		1&1-t&0&\ldots &\ldots &0\\
		0&e^t&1-t&0&\ldots &0\\
		\vdots &0&\ddots &\ddots &0&\vdots\\
		0&\ldots &0&e^t&1-t &0\\
		0&\ldots &\ldots &0&e^t&1-t\\
		0&\ldots &\ldots &\ldots &0&e^t
	\end{smallmatrix}
\right)$. 
Note that  $\mu_{RS}(\psi_{\uppercase\expandafter{\romannumeral 2}}) = 0$ since the eigenspace of eigenvalue $1$ has constant dimension
and $\rho$ is constant on $\psi_{\uppercase\expandafter{\romannumeral 2}}$.\\
We define $\psi_{\uppercase\expandafter{\romannumeral 3}} : [0,1] \rightarrow\Sp(\R^{2n},\Omega_{0})$
from $\psi_{\uppercase\expandafter{\romannumeral 2}}(1)$ to 
$$
	\psi^{\star}(1)\diamond
	\left(
		\begin{smallmatrix}
			\Id&0\\
			0& \Id
		\end{smallmatrix}
	\right)
	\diamond
	\left(
		\begin{smallmatrix}
			e^{-1}\Id&0\\
			0& e\Id
		\end{smallmatrix}
	\right)
$$
which is given on each block $
\left(
	\begin{smallmatrix}
		1&d^{(1)}_j\\
		0&1
	\end{smallmatrix}
\right)
$ by $
\left(
	\begin{smallmatrix}
		1&(1-t)d^{(1)}_j\\
		0&1
	\end{smallmatrix}
\right)$.
Note that  $\mu_{RS}(\psi_{\uppercase\expandafter{\romannumeral 3}}) = \half\sum_j d^{(1)}_j$ 
by proposition \ref{RSforshears}
and $\rho$ is constant
on $\psi_{\uppercase\expandafter{\romannumeral 3}}$.\\	
Finally, consider $\psi_{\uppercase\expandafter{\romannumeral 4}} : [0,1] \rightarrow\Sp(\R^{2n},\Omega_{0})$
from $\psi_{\uppercase\expandafter{\romannumeral 3}}(1)$ to 
$$
	\psi^{\star}(1)\diamond
	\left(
		\begin{smallmatrix}
			e^{-1}\Id&0\\
			0& e\Id
		\end{smallmatrix}
	\right)
$$
which is given by $\psi^{\star}(1)\diamond
\left(
	\begin{smallmatrix}
		e^{-t}&0\\
		0&e^{t}
	\end{smallmatrix}
\right)
\diamond
\left(
	\begin{smallmatrix}
		e^{-1}\Id&0\\
		0& e\Id
	\end{smallmatrix}
\right)$.
Note that  $\mu_{RS}(\psi_{\uppercase\expandafter{\romannumeral 4}}) =0$, 
$\rho$ is constant
on $\psi_{\uppercase\expandafter{\romannumeral 4}}$ and	
 $\psi_{\uppercase\expandafter{\romannumeral 4}}(1)$ is in 	
$\Sp^{\star}(\R^{2n},\Omega_{0})$.
Since two paths of matrices with fixed ends are homotopic if and only if their image under $\rho$
are homotopic,  the catenation of the paths $\psi_{\uppercase\expandafter{\romannumeral  3}}$ and
$\psi_{\uppercase\expandafter{\romannumeral 4}}$  is homotopic to any path from $ \psi_{\uppercase\expandafter{\romannumeral  1}}(1)$
 to $\psi^{\star}(1)\diamond
\left(
	\begin{smallmatrix}
		e\Id&0\\
		0& e^{-1}\Id
	\end{smallmatrix}
\right)$
of the form $\psi^{\star}(1) \diamond ) \diamond \phi_1(t)$ where $\phi_1(t)$  has only real positive eigenvalues.
We proceed similarly  for $\psi(0)$ and we get
\begin{theorem}\label{RSexplicite}
	Let $\psi : [0,1]\rightarrow\Sp(\R^{2n},\Omega_{0})$ be a path of symplectic matrices.
	Decompose $\psi(0) = \psi^{\star}(0)\diamond \psi^{(1)}(0)$ and 
	$\psi(1) = \psi^{\star}(1)\diamond \psi^{(1)}(1)$
	where $ \psi^{\star}(0)$ (resp. $ \psi^{\star}(1)$)  does not admit $1$ as eigenvalue and
	$\psi^{(1)}(0)$ (resp. $\psi^{(1)}(1)$) is the restriction of $\psi(0)$ (resp. $\psi(1)$) to the generalized eigenspace of eigenvalue $1$
	of $\psi(0)$ (resp. $\psi(1)$).  
	Consider a prolongation  $\Psi:[-1,2]\rightarrow \Sp(\R^{2n},\Omega_{0})$ of $\psi$ such that
	\begin{itemize}
		\item  $\Psi(t)=\psi(t) \,\forall t\in [0,1]$;
		\item $\Psi\bigl(-\half\bigr) =\psi^{\star}(0)\diamond
			\left(
				\begin{smallmatrix}
					e^{-1}\Id&0\\
					0& e\Id
				\end{smallmatrix}
			\right)$ and $\Psi(t)=\psi^{\star}(0) \diamond \phi_0(t)$ where $\phi_0(t)$  has only real positive eigenvalues
			 for $t\in \bigl[-\half,0\bigr]$;
		\item $\Psi\bigl(\frac{3}{2}\bigr)=\psi^{\star}(1)\diamond
			\left(
				\begin{smallmatrix}
					e^{-1}\Id&0\\
					0& e\Id
				\end{smallmatrix}
			\right)$
			and $\Psi(t)=\psi^{\star}(1) \diamond \phi_1(t)$ where $\phi_1(t)$  has only real positive eigenvalues
			 for  $t\in \bigl[1,\frac{3}{2}\bigr]$;
		\item $\Psi(-1)=W^{\pm}$, $\Psi(2)=W^{\pm}$ and  $\Psi(t)\in \Sp^{\star}(\R^{2n},\Omega_{0})$ for $t\in \bigl[-1,-\half\bigr]\cup\bigl[\frac{3}{2},2\bigr]$.
	\end{itemize}
	Then
		$$
		\mu_{RS}(\psi) =\deg(\rho^2\circ \Psi) + \half\sum d_{i}^{(0)} - \half\sum d_{j}^{(1)}.
	$$
\end{theorem}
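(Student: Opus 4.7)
The plan is to split $\Psi$ via catenation at $t=0$ and $t=1$ into three pieces and to show separately that (i) $\mu_{RS}(\Psi)=\operatorname{deg}(\rho^{2}\circ\Psi)$, and (ii) the two cap pieces contribute exactly $-\tfrac{1}{2}\sum_{i}d_{i}^{(0)}$ and $+\tfrac{1}{2}\sum_{j}d_{j}^{(1)}$ respectively. Summing these three contributions yields the formula.

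For (i), since $\Psi(-1),\Psi(2)\in\{W^{+},W^{-}\}\subset\Sp^{\star}(\R^{2n},\Omega_{0})$ and $\rho^{2}(W^{\pm})=1$, the composite $\rho^{2}\circ\Psi$ is a loop in $S^{1}$ based at $1$ with well-defined integer degree. I would prepend a path $\beta$ from $\Id$ to $\Psi(-1)$ with $\beta(t)\in\Sp^{\star}(\R^{2n},\Omega_{0})$ for $t>0$; the catenation $\beta*\Psi$ then lies in $\SP(n)$, and both it and $\beta$ alone satisfy $\mu_{RS}=\mu_{CZ}=\operatorname{deg}(\rho^{2}\circ\cdot)$ (by the coincidence of the two indices on $\SP(n)$ established in Section \ref{section:RSsympl} together with the definition of $\mu_{CZ}$). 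Subtracting these two identities via the additivity of both quantities under catenation gives $\mu_{RS}(\Psi)=\operatorname{deg}(\rho^{2}\circ\Psi)$.

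For (ii), I would split $\Psi|_{[1,2]}$ at $t=3/2$. The piece $\Psi|_{[3/2,2]}$ stays in $\Sp^{\star}$, so the zero property gives $\mu_{RS}(\Psi|_{[3/2,2]})=0$. On $[1,3/2]$ the path has the prescribed form $\psi^{\star}(1)\diamond\phi_{1}(t)$ with $\phi_{1}(t)$ of positive real spectrum, and formula \eqref{eqrho} shows that $\rho$ takes the constant value $\rho(\psi^{\star}(1))$ along any such path; hence by the $\rho$-classification of paths in $\Sp(\R^{2n},\Omega_{0})$ rel endpoints, this subpath is homotopic with fixed endpoints to the four-step catenation $\psi_{I}*\psi_{II}*\psi_{III}*\psi_{IV}$ constructed just above the theorem, giving
\begin{equation*}
\mu_{RS}(\Psi|_{[1,3/2]})=\mu_{RS}(\psi_{I})+\mu_{RS}(\psi_{II})+\mu_{RS}(\psi_{III})+\mu_{RS}(\psi_{IV})=0+0+\tfrac{1}{2}\sum_{j}d_{j}^{(1)}+0.
\end{equation*}
The vanishing of $\mu_{RS}(\psi_{I})$ and $\mu_{RS}(\psi_{II})$ follows from the zero property once one verifies that the $1$-eigenspace dimension is constant along each (by conjugation-invariance for $\psi_{I}$, and by the non-vanishing determinant computation from the proof of Lemma \ref{charRS1} for $\psi_{II}$); $\mu_{RS}(\psi_{III})=\tfrac{1}{2}\sum_{j}d_{j}^{(1)}$ comes from Proposition \ref{RSforshears}; and $\mu_{RS}(\psi_{IV})=0$ from a direct crossing-form computation at the unique crossing $t=0$, whose signature is zero.

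The entirely symmetric analysis at $\psi(0)$, in which the analogous four paths are traversed in reverse (so the shear contribution flips sign), yields $\mu_{RS}(\Psi|_{[-1,0]})=-\tfrac{1}{2}\sum_{i}d_{i}^{(0)}$. Combining the three catenation contributions delivers the stated formula. The main obstacle is the justification of the homotopy rel endpoints between $\Psi|_{[1,3/2]}$ and the four-step catenation: this I would handle using the fact that two paths in $\Sp(\R^{2n},\Omega_{0})$ with common endpoints are homotopic with fixed endpoints if and only if their images under $\rho$ have equal winding, a criterion which becomes trivial here because $\rho$ is in fact constant along both paths.
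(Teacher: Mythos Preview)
Your proof is correct and follows essentially the same route as the paper: build the four auxiliary paths $\psi_{I},\psi_{II},\psi_{III},\psi_{IV}$, compute their Robbin--Salamon indices (zero, zero, $\tfrac12\sum d_j^{(1)}$, zero), observe that $\rho$ is constant along each, and then use the $\rho$-winding criterion for homotopy rel endpoints to identify $\Psi|_{[1,3/2]}$ with their catenation. The only substantive addition you make is step~(i), where you prepend a short path $\beta$ from $\Id$ to $W^{\pm}$ inside $\Sp^{\star}$ in order to reduce the identity $\mu_{RS}(\Psi)=\deg(\rho^{2}\circ\Psi)$ to two applications of $\mu_{RS}=\mu_{CZ}$ on $\SP(n)$; the paper leaves this step implicit, so your version is in fact slightly more complete.
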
 
Remark that we can replace in the formula above $\rho$ by $\tilde{\rho}$ as in proposition \ref{compdef}.\\
By proposition  \ref{normalforms1}, we have theorem \ref{RSexpl} :
	$$
		\mu_{RS}(\psi) = \deg(\rho^{2}\circ\Psi)+ \half \sum_{k=1}^{dim V}\sign\Bigl(\hat{Q}_k^{(\psi(0))}\Bigr)
		 -\half \sum_{k=1}^{dim V}\sign\Bigl(\hat{Q}_k^{(\psi(1))}\Bigr).
	$$

\begin{remark}
	The advantage of this new formula is that to compute the index of a path whose crossing with the Maslov cycle is non transverse we do not need to perturb the path.
	The drawback is that we have to extend the initial path.
\end{remark}

\bibliographystyle{alpha}
\nocite{*}
\bibliography{FinalgeneralizedCZ.bib}

\end{document}